\newcommand\unnumberedfootnote[1]{ %
        \let\temp=\thefootnote %
        \renewcommand{\thefootnote}{}%
        \footnote{#1}%
        \let\thefootnote=\temp%
        \addtocounter{footnote}{-1}}
\newcommand{\1}{\mathds{1}}
\newcommand{\E}{\mathbb E}
\newcommand{\Pw}{\mathbb P}
\newcommand{\cL}{\mathcal L}
\newcommand{\N}{\mathbb N}
\newcommand{\R}{\mathbb R}
\newtheorem{theorem}{Theorem}
\newtheorem{proposition}{Proposition}[section]
\newtheorem{lemma}[proposition]{Lemma}
\newtheorem{definition}[proposition]{Definition}
\theoremstyle{definition}
\newtheorem{remark}[proposition]{Remark}
\numberwithin{equation}{section}
\begin{document}

\title{\LARGE 
The partial duplication random graph with edge deletion}

\thispagestyle{empty}

\author{{\sc by  Felix Hermann and Peter Pfaffelhuber} \\[2ex]
  \emph{Albert-Ludwigs University Freiburg} } \date{March 28th}

\maketitle
\unnumberedfootnote{\emph{AMS 2000 subject classification.} {\tt
    05C80} (Primary) {\tt , 60K35} (Secondary).}

\unnumberedfootnote{\emph{Keywords and phrases.} Random graph; degree
  distribution; cliques}

\vspace*{-5ex}

\begin{abstract}
  \noindent
  We study a random graph model in continuous time. Each vertex is
  partially copied with the same rate, i.e.\ an existing vertex is
  copied and every edge leading to the copied vertex is copied with
  independent probability $p$. In addition, every edge is deleted at
  constant rate, a mechanism which extends previous partial
  duplication models. In this model, we obtain results on the degree
  distribution, which shows a phase transition such that either -- if
  $p$ is small enough -- the frequency of isolated vertices converges
  to~1, or there is a positive fraction of vertices with unbounded
  degree. We derive results on the degrees of the initial vertices as
  well as on the sub-graph of non-isolated vertices. In particular, we
  obtain expressions for the number of star-like subgraphs and
  cliques.
\end{abstract}

\section{Introduction}
Various random graph models have been studied in the last decades.
Frequently, such models try to mimic the behavior of social networks
(see e.g.\ \citealp{CooperFrieze2003} and \citealp{BarabasiEtAl2002})
or interactions within biological networks (see e.g.\
\citealp{Wagner2001}, \citealp{Albert2005} and \citealp{Jeong2000}).
For a general introduction to random graphs see the monographs
\cite{Durrett}, \cite{Remco} and references therein.

In this paper, we study and extend a model introduced in
\cite{Bhan2002}, \cite{ChungEtAl2003}, \cite{PastorSatorras2003},
\cite{ChungEtAl2003}, \cite{BebekEtAl2006}, \cite{BebekEtAl2006ip},
\cite{HerPf1} and \cite{Jordan2017}. Here, a vertex models a protein
and an edge denotes some form of interaction. Within the genome, the
DNA encoding for a protein can be duplicated (which in fact is a long
evolutionary process), such that the interactions of the copied
protein are partially inherited to the copy; see \cite{Ohno1970} for
some more biological explanations. Within the random graph model, a
vertex is $p$-copied, i.e.\ a new vertex is introduced and every edge
of the parent vertex is independently copied with the same probability
$p$.  We will extend this model by assuming mutation (at some constant
rate), which can destroy interactions, leading to the loss of edges in
the random graph at constant rate.

\noindent
In the model without edge deletion, which we will call \emph{pure
  partial duplication model}, \cite{HerPf1} have determined a critical
parameter $p^\ast\approx0.567143$, the unique solution of $pe^p=1$,
below which approximately all vertices are isolated. Moreover, almost
sure asymptotics and limit results for the number of $k$-cliques and
$k$-stars in the random graph as well as for the degree of a fixed
node were obtained. Recently, \cite{Jordan2017} has shown that for
$p<e^{-1}$ 
the degree distribution of the connected component, i.e. of the
subgraph of non-isolated vertices, has a limit with tail behavior
close to a power-law with exponent $\beta$ solving
$\beta-3+p^{\beta-2}=0$ (cf.\ \citealp[Theorem 1(c)]{Jordan2017}).

In the model with edge deletion, we will add results on the degree
distribution of the full graph (see
Theorem~\ref{thm:deg-dis-whole-graph}), the sub-graph of non-isolated
vertices (see Proposition~\ref{prop:limitOfI_x}) as well as the number
of star-like graphs, cliques, and degrees of initial vertices (see
Theorem~\ref{thm:functionals}).  We will see that the degree
distribution of this model is closely related to a branching process
as in \cite{Jordan2017}, but now with death-rate $\delta$, the rate of
edge deletion. In turn, such branching processes can be studied by
using piecewise-deterministic Markov jump processes, a tool which we
will introduce in Lemma~\ref{lem:dual-p-jump-process}; see also
Section~\ref{ss31} for the connection to branching processes. These
insights allow us to transfer results on branching processes to the
degree distribution, generalizing the results of Theorem 2.7 in
\cite{HerPf1} to the model with edge deletion; see
Section~\ref{S:proof1}. Here, we derive a phase transition such that
if $p$ is small, the fraction of vertices with positive degree
vanishes, while for larger $p$, vertices are either isolated (i.e.\
have vanishing degree) or have unbounded degree. In Section
\ref{S:proof2}, we prove Theorem~\ref{thm:functionals} and derive
almost sure asymptotics and limit results for binomial moments,
cliques and the degree of a fixed node mainly by applying martingale
theory, generalizing Theorems~2.9 and~2.14 in \cite{HerPf1}.

\section{Model and main results}
\label{S:model}

\begin{definition}[Partial duplication graph process with edge
  deletion]\label{def:PDwEdgeDel}
  Let $p\in[0,1]$, $\delta\geq0$ and $G_0=(V_0,E_0)$ be a deterministic undirected graph
  without loops with vertex set $V_0=\{v_1,\ldots,v_{|V_0|}\}$ and non-empty edge set $E_0$.
  Let $PD(p,\delta)=(G_t)_{t\geq0}$ be the continuous-time graph-valued Markov process
  starting in $G_0$ and evolving in the following way:
  \begin{itemize}
  \item[--] Every node $v\in V_t$ is $p$-partially duplicated (or
    \emph{$p$-copied} for short) at rate $\kappa_t:=(|V_t|+1)/|V_t|$,
    i.e.\ a new node $v_{|V_t|+1}$ is added and for each $w\in V_t$
    with $(v,w)\in E_t$, $v_{|V_t|+1}$ is connected to $w$
    independently with probability $p$.
    \item[--]
      Every edge in $E_t$ is removed at rate $\delta$.
  \end{itemize}
  Then, $PD(p,\delta)$ is a \emph{partial duplication graph process
    with edge deletion} with \emph{initial graph} $G_0$,
  \emph{edge-retaining probability} $p$ and \emph{deletion rate}
  $\delta$.
  Within $PD(p,\delta)$, we define the following quantities:
  \begin{enumerate}
  \item Let $D_i(t):=\deg_{G_t}(v_i)\cdot\1_{\{i\leq|V_t|\}}$ be the
    \emph{degree of $v_i$}, i.e. the number of its neighbors, at time
    $t$.
  \item Let $F(t):=(F_k(t))_{k=0,1,2,...}$ with
    $F_k(t):=|\{1\leq i\leq |V_t|:D_i(t)=k\}|/|V_t|$ for $k=0,1,2,...$
    be the \emph{degree distribution} at time $t$.  Furthermore, let
    $F_+(t):=1-F_0(t)$ be the proportion of vertices of positive
    degree.
  \item For $k=1,2,...$ let
    $B_k(t):=\sum_{\ell\geq k}\binom \ell k F_\ell(t)$ be the
    \emph{$k$th binomial moment} of the degree distribution. Note that
    $B_k(t)$ is the expected number of subgraphs of $V_t$ with $k$
    nodes, all of which share one randomly chosen center, i.e.\ the
    expected number of star-like trees with $k$ leaves.
  \item For $k=1,2,...$ let $C_k(t)$ be the \emph{number of $k$-cliques}
    at time $t$, i.e. the number of complete sub-graphs of size $k$.
  \end{enumerate}
\end{definition}

\begin{remark}
  \begin{enumerate}
  \item Note that $PD(p, \delta)$ is a time-continuous duplication
    model and duplication events (total rate $|V_t|+1)$ at time $t$
    and the deletion events (total rate $\delta |E_t|$ at time $t$)
    depend on different statistics. An alternative, discrete model
    would add a node or delete an edge with some fixed
    probability. However, such a model would behave differently from
    the model above.
  \item We choose the duplication rate $\kappa_t := (|V_t|+1) / |V_t|$
    in order to get a closed recurrence relation for the degree
    distribution; see Lemma \ref{lem:duality}. Alternatively, we would
    set $\widetilde \kappa_t:=1$, i.e.\ all vertices are copied at
    unit rate. Since $V_t \sim e^t$ (see
    Lemma~\ref{lem:yule-process}), and
    $\int_0^t \kappa_s - \widetilde\kappa_s ds \sim \int_0^t e^{-s} ds
    \xrightarrow{t\to\infty} 1 \ll \int_0^t \kappa_s ds$, we expect
    the random graph with our choice of $\kappa_t$ to behave the same
    qualitatively for $t\to\infty$, while differences can occur
    quantitatively.
  \end{enumerate}
\end{remark}

\noindent
In order to formulate our results, we need an auxiliary process, which
is connected to $PD(p,\delta)$. It will appear below in
Theorem~\ref{thm:deg-dis-whole-graph}.1 and in
Proposition~\ref{prop:limitOfI_x}. The proof of the following Lemma is
found in Section~\ref{ss32}.

\begin{lemma}[Connection of $PD(p,\delta)$ and a
  piecewise-deterministic Markov
  process]\label{lem:dual-p-jump-process}
  Let $X = (X_t)_{t\geq 0}$ be a Markov process on $[0,1]$ jumping at
  rate 1 from $X_t = x$ to $px$, in between jumps evolving according
  to $\dot X_t=pX_t(1-X_t)-\delta X_t$.  Furthermore, let
  \begin{align}
    \label{eq:H}
    H_x(t)
    := \sum_{k=0}^\infty(1-x)^kF_k(t),
  \end{align}
  i.e.\ the probability generating function of the degree distribution
  at time $t$.  Then, for all $t\geq0$ and $x\in[0,1]$, writing
  $\mathbb E_x[.] := \mathbb E[.|X_0=x]$,
  \begin{align}
    \label{eq:duality}
    \E[H_x(t)] = \E_x[H_{X_t}(0)] = \sum_{k=0}^\infty F_k(0) \cdot
    \E_x[(1-X_t)^k].
  \end{align}
\end{lemma}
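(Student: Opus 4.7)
The plan is to prove the duality via a standard generator-matching argument. Let $\cL^{PD}$ denote the generator of $PD(p,\delta)$ (acting on functions of the graph) and $\cL^X$ that of the auxiliary process $X$ (acting on functions of $x\in[0,1]$). Viewing $\Psi(G,x):=H_x(G)=\sum_{k\geq0}(1-x)^k F_k(G)$ as a function of both arguments, I aim to establish the generator identity
\begin{align*}
  \cL^{PD}\Psi(\cdot,x)(G) \;=\; \cL^X\Psi(G,\cdot)(x)
\end{align*}
for every graph $G$ and $x\in[0,1]$. Once this holds, \eqref{eq:duality} follows by the standard coupling argument: the map $s\mapsto \E[\Psi(G_s,X_{t-s})]$ is constant on $[0,t]$, since its derivative equals $(\cL^{PD}-\cL^X)\Psi=0$, yielding $\E[H_x(G_t)]=\E_x[H_{X_t}(G_0)]=\sum_k F_k(0)\,\E_x[(1-X_t)^k]$ (using that $G_0$ is deterministic).

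The right-hand side is immediate: by definition of $X$,
$\cL^X f(x)=f(px)-f(x)+[px(1-x)-\delta x]f'(x)$,
so $\cL^X\Psi(G,\cdot)(x)=H_{px}(G)-H_x(G)+[px(1-x)-\delta x]\partial_x H_x(G)$.

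For the left-hand side I split into the deletion and duplication contributions. For an edge deletion at $(v,w)\in E_t$ (rate $\delta$), both $d_v$ and $d_w$ drop by one, so the change in $|V_t|H_x=\sum_v(1-x)^{d_v}$ equals $\tfrac{x}{1-x}[(1-x)^{d_v}+(1-x)^{d_w}]$; summing over edges, noting that each vertex $v$ lies in $d_v$ edges, and using the identity $\tfrac1{|V_t|}\sum_v d_v(1-x)^{d_v}=-(1-x)\partial_x H_x$, the deletion contribution simplifies to $-\delta x\,\partial_x H_x$. For a duplication at $v$ (rate $\kappa_t$), the new vertex has degree $\mathrm{Bin}(d_v,p)$, contributing expected value $(1-px)^{d_v}$ to the numerator, while each neighbor $w$ of $v$ has $(1-x)^{d_w}$ replaced in expectation by $(1-x)^{d_w}(1-px)$ (its degree grows by one with probability $p$). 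Writing $H_x=S/|V_t|$ with $S:=\sum_v(1-x)^{d_v}$ and tracking the joint update of $S$ and the denominator $|V_t|\to|V_t|+1$, multiplying by $\kappa_t=(|V_t|+1)/|V_t|$, and summing over $v$, the factors $1/(|V_t|+1)$ cancel exactly, and using $\tfrac1{|V_t|}\sum_v(1-px)^{d_v}=H_{px}$ together with the same identity for $d_v(1-x)^{d_v}$, the duplication contribution reduces to $H_{px}-H_x+px(1-x)\partial_x H_x$. Adding the two parts gives precisely $\cL^X\Psi(G,\cdot)(x)$.

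The main technical obstacle is this duplication computation: one must simultaneously account for the change of $|V_t|$, the contribution of the newly inserted vertex, and the altered contributions of all neighbors of the copied vertex. The specific choice $\kappa_t=(|V_t|+1)/|V_t|$ is what produces the exact cancellation of the $1/(|V_t|+1)$ factors; any other rate would leave residual terms and would fail to yield an identity closed in $H_x$ (equivalently, closed in the $F_k$'s), consistent with the motivation for this choice flagged in the preceding remark.
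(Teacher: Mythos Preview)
Your generator-matching argument is correct. The computations for the deletion and duplication parts are right (in particular, the correlation between the new vertex's degree and the degree changes of the neighbors is irrelevant by linearity of expectation), and the identity $\cL^{PD}\Psi(\cdot,x)=\cL^X\Psi(G,\cdot)$ follows exactly as you outline. Since $G_0$ is a finite deterministic graph, $H_x$ is a polynomial in $1-x$ at any finite time, so there are no domain issues for the generators, and the standard duality argument (with $X$ taken independent of the graph process) applies.

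The paper, however, proceeds quite differently. It does not compute $\cL^{PD}H_x$ directly; instead it first shows (Lemma~\ref{lem:duality}) that the expected degree distribution $(\E[F_k(t)])_k$ solves the Kolmogorov forward equation of the birth--death process with binomial disasters $Z=Z(p,\delta,p)$, whence $\E[H_x(t)]=\E[(1-x)^{Z_t}]$. It then invokes a duality (from the companion paper \cite{HerPf2}) between such branching processes and the piecewise-deterministic process $X$. So the paper's route is: graph $\leadsto$ branching process $Z$ $\leadsto$ dual process $X$, whereas yours goes directly graph $\leadsto$ $X$. Your argument is more self-contained and makes transparent why the specific rate $\kappa_t=(|V_t|+1)/|V_t|$ is needed. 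The paper's approach, on the other hand, is natural in context because the intermediate object $Z$ (and the results of \cite{HerPf2} on its survival probability) is what actually drives the proof of Theorem~\ref{thm:deg-dis-whole-graph}; the duality lemma is merely a corollary of machinery developed there.
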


\begin{theorem}[Limit of the degree distribution]\label{thm:deg-dis-whole-graph}
  Let $p\in(0,1)$ and $\delta\geq0$.
  \begin{enumerate}
  \item If $\delta\geq p-p\log\frac1p$,
    $F(t)\xrightarrow{t\to\infty}(1,0,0,\ldots)$ almost surely with
    (recall $F_+ = 1-F_0$, and writing $a_t \sim b_t$ iff
    $a_t/b_t\xrightarrow{t\to\infty} 1$)
    \begin{align*}
      \E[F_+(t)] \sim ce^{-t(1 + \delta - 2p)},
    \end{align*}
    where $X$ is as in
    Lemma~\ref{lem:dual-p-jump-process}
    and
    $$ c = \exp\Big(-p\int_0^\infty \frac{\mathbb E_1[X_s^2]}{\mathbb E_1[X_s]}ds\Big) \in (0,B_1(0)).$$
  \item If $p-p\log\frac1p \geq \delta\geq p-\log\frac1p$,
    $F(t)\xrightarrow{t\to\infty}(1,0,0,\ldots)$ almost
    surely with
    \begin{align*}
      -\tfrac1t\log\E[F_+(t)]
      \xrightarrow{t\to\infty}
      1 - \tfrac1\gamma(1+\log\gamma),
    \end{align*}
    where $\gamma=\log\frac1p/(p-\delta)$.
    \item
      If $\delta<p-\log\frac1p$, $F(t)\xrightarrow{t\to\infty}(F_0,0,0,\ldots)$ almost surely,
      where $F_0$ is non-deterministic and
      \begin{align*}
        \E[F_0]
        &= 1 - \Big(1-\frac\delta p-\frac1p\log\frac1p\Big)
          \sum_{k=1}^\infty B_k(0)(-1)^{k-1}
          \prod_{\ell=1}^{k-1}\Big(1-\frac\delta p-\frac{1-p^\ell}{p\ell}\Big),
      \end{align*}
      where we define $\prod_\emptyset := 1$.
    \end{enumerate}
\end{theorem}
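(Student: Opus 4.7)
The plan is to turn Theorem~\ref{thm:deg-dis-whole-graph} into a moment problem for the PDMP $X$ of Lemma~\ref{lem:dual-p-jump-process}. Let $m_\alpha(t) := \mathbb E_1[X_t^\alpha]$. Taking $x = 1$ in \eqref{eq:duality} and expanding the binomial inside gives
\[
  \mathbb E[F_+(t)]
  \;=\; 1 - \mathbb E_1[H_{X_t}(0)]
  \;=\; \sum_{k \ge 0} F_k(0)\,\mathbb E_1\bigl[1 - (1 - X_t)^k\bigr]
  \;=\; \sum_{j \ge 1} (-1)^{j-1} B_j(0)\, m_j(t),
\]
a sum over finitely many $j$ since $G_0$ is finite. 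A short infinitesimal-generator computation applied to $x \mapsto x^\alpha$ yields the hierarchy $\dot m_\alpha = \lambda(\alpha)\, m_\alpha - \alpha p\, m_{\alpha+1}$ with $\lambda(\alpha) := (p - \delta)\alpha + p^\alpha - 1$. This function is convex with $\lambda(0) = 0$, $\lambda'(0) = p - \delta + \log p$, and $\lambda'(1) = p - \delta + p\log p$, so the three regimes in the theorem correspond precisely to the signs $\lambda'(1) \le 0$, $\lambda'(0) \le 0 \le \lambda'(1)$, and $\lambda'(0) > 0$; the thresholds $p - p\log\tfrac1p$ and $p - \log\tfrac1p$ appear automatically.

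For Case~1, $\lambda'(1) \le 0$ forces $\lambda'(0) < 0$ by convexity, so $X_t \to 0$ almost surely. Integrating the $\alpha = 1$ ODE gives
\[
  m_1(t) \;=\; e^{\lambda(1) t}\,\exp\Bigl(-p \int_0^t m_2(s)/m_1(s)\,ds\Bigr), \qquad \lambda(1) = -(1 + \delta - 2p),
\]
and a one-step iteration of the hierarchy combined with the crude bound $m_j \le m_1$ on $[0,1]$ shows both that $\int_0^\infty m_2/m_1\,ds < \infty$ and that $m_j(t) = o(m_1(t))$ for $j \ge 2$; plugging these into the finite expansion above then produces the stated asymptotic with the stated constant $c$. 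Case~3 is the mirror image: $\lambda'(0) > 0$ makes the linearized process grow, but the logistic drift $-pX_t^2$ confines $X_t$ to $(0, 1]$, and standard Harris-recurrence arguments for piecewise-deterministic jump processes produce a unique stationary law $\pi$. Writing $m_j^* := \int x^j\,d\pi$, stationarity forces $jp\,m_{j+1}^* = \lambda(j)\,m_j^*$; the missing base case is pinned down by applying the generator to $\log x$ (deterministic drift $p - \delta - pX_t$, jumps contributing $\log p$) and using stationarity, giving $0 = \lambda'(0) - p\,m_1^*$, i.e.\ $m_1^* = 1 - \delta/p - \tfrac{1}{p}\log\tfrac{1}{p}$. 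Iterating yields $m_j^* = m_1^* \prod_{\ell=1}^{j-1}\bigl(1 - \delta/p - (1 - p^\ell)/(p\ell)\bigr)$, and passing to the limit in the duality identity reproduces the explicit expression for $\mathbb E[F_0]$.

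The chief obstacle is Case~2, in which $\lambda$ attains its minimum at an \emph{interior} $\alpha^* \in (0, 1)$ satisfying $p^{\alpha^*} = 1/\gamma = (p - \delta)/\log\tfrac{1}{p}$, whence $-\lambda(\alpha^*) = 1 - \tfrac{1}{\gamma}(1 + \log\gamma)$. The decay rate is a \emph{fractional} moment of $X_t$ and therefore invisible to the integer expansion above, so I would instead bound $1 - H_x(0)$ directly: combining $1 - H_x(0) \le B_1(0)\,x$ (Bernoulli) with the trivial $1 - H_x(0) \le 1$ gives $1 - H_x(0) \le C_\beta\, x^\beta$ for every $\beta \in (0, 1]$, whence $\mathbb E[F_+(t)] \le C_\beta\, m_\beta(t)$; running the moment ODE at the fractional exponent $\beta = \alpha^*$, where $\lambda'(\alpha^*) = 0$ renders the nonlinear coupling subleading, yields the correct upper rate. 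For the matching lower bound I would restrict to $\{X_t \ge \epsilon\}$, on which $1 - H_{X_t}(0) \ge (1 - F_0(0))\,\epsilon$, and show by a Cramér / exponential-tilt argument on the Poisson jump clock of $X$ that $\mathbb P_1(X_t \ge \epsilon)$ decays at exactly rate $\lambda(\alpha^*)$, with $\gamma = p^{-\alpha^*}$ playing the role of the optimal tilt. The remaining almost-sure statements, namely that $F_k(t) \to 0$ for $k \ge 1$ in every case and that the Case~3 limit $F_0$ is a genuinely random variable, I would obtain by upgrading the expectation bounds through second-moment estimates derived from a two-point variant of \eqref{eq:duality} that tracks two vertices simultaneously, combined with a Borel--Cantelli / martingale argument along the Yule skeleton that the later sections also exploit for Theorem~\ref{thm:functionals}.
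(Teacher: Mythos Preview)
Your overall architecture---reduce everything to the moment hierarchy $\dot m_\alpha = \lambda(\alpha)m_\alpha - \alpha p\,m_{\alpha+1}$ for the PDMP $X$ and read off the three regimes from the convexity of $\lambda$---is sound and in fact coincides with the paper's route for Case~1 (see Lemma~\ref{lem:momentsOfX} and its use in Section~\ref{S:proof1}). However, several steps that you flag as routine are where the actual work lies, and in two places the paper takes a markedly simpler path.

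\textbf{Almost-sure convergence.} You propose second-moment control via a two-point duality and Borel--Cantelli along the Yule skeleton. This is unnecessary: the paper observes directly from the forward equation \eqref{eq:lem:duality} that $(F_0(t))_{t\ge 0}$ is a bounded submartingale whose drift contains the term $\sum_{\ell\ge 1}(1-p)^\ell F_\ell(t)\ge 0$. Almost-sure convergence of $F_0$ then forces this term to vanish pathwise, giving $F_\ell(t)\to 0$ a.s.\ for every $\ell\ge 1$ in one line and in all three regimes simultaneously.

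\textbf{Case~1.} Your claim that ``a one-step iteration combined with $m_j\le m_1$'' yields $\int_0^\infty m_2/m_1\,ds<\infty$ does not go through with the bounds you state: combining $m_2(t)\le e^{\lambda(2)t}$ with $m_1(t)\ge e^{(\lambda(1)-p)t}$ only gives $m_2/m_1\le e^{(p+p^2-\delta)t}$, which is not integrable on the subrange $p-p\log\tfrac1p\le\delta\le p+p^2$. The paper establishes the required exponential decay of $m_2/m_1$ in Lemma~\ref{lem:momentsOfX}, but by invoking Corollary~2.4 of \cite{HerPf2}, which already identifies the exact exponential rate of each $m_k$; this is not a triviality, and your plan would need a self-contained substitute.

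\textbf{Cases~2 and~3.} Here your route diverges genuinely from the paper's. The paper does \emph{not} analyse $X$ further in these regimes; instead it exploits a second duality (Lemma~\ref{lem:duality}) identifying $\E[F_k(t)]$ with the law of a birth-death process $Z(p,\delta,p)$ subject to binomial disasters, and then imports the survival-probability asymptotics for such processes wholesale from Corollary~2.7 of \cite{HerPf2}. Your direct PDMP approach---fractional-moment upper bounds plus a Cram\'er tilt on the jump clock for Case~2, Harris recurrence and the generator-on-$\log x$ identity for Case~3---is a legitimate alternative and, if carried out, would make the argument self-contained. The $\log x$ computation pinning down $m_1^*=1-\delta/p-\tfrac1p\log\tfrac1p$ is correct and elegant, but you still owe integrability of $\log X$ under $\pi$ and, more substantially, convergence $m_j(t)\to m_j^*$ (not just existence of $\pi$). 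The Case~2 lower bound via tilting is the right heuristic but is precisely the large-deviation estimate that \cite{HerPf2} proves; reproducing it is the bulk of the work you would be taking on.
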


\begin{remark}[Interpretations]\label{rem:thm:deg-dis-whole-graph}
  \begin{enumerate}
  \item Clearly, the quantity $F_0$ is increasing in $\delta$ and
    decreasing in $p$ (and $F_+$ is decreasing in $\delta$ and
    increasing in $p$). See also the illustrations of
    Theorem~\ref{thm:deg-dis-whole-graph} in Figure~\ref{fig1}. We
    note that for $\delta=0$, the three cases can be distinguished
    using $p^\ast$, the solution of $pe^p=1$ (or
    $\frac 1p \log \frac 1p = 1$). The three cases are then
    $p\leq 1/e$,
    $ 1/e\leq p \leq p^\ast$ and $p> p^\ast$.\\
    For 3., we will see in the proof that the right hand side is the
    hitting probability of a stochastic process, and in particular is
    in $(0,1)$. This interpretation shows that $0<\mathbb E[F_0]<1$ as
    long as the initial graph is not trivial (i.e.\ $F_0(0)<1$).
  \item The asymptotics given in case $\delta\geq p-p\log\frac1p$ is
    more exact than the one given for
    $p-p\log\frac1p \geq \delta\geq p - \log \frac 1p$ (in the sense
    that $-\tfrac 1t \log \mathbb E[F_+(t)] \sim 1 + \delta - 2p$ is a
    consequence of 1. of the above Theorem). The reason is that we can
    give a formula for $c$ in this case, which does not carry over to
    2.; see the proof of Proposition~\ref{prop:limitOfI_x}.
  \end{enumerate}
\end{remark}

\begin{figure}[htb]
  \begin{center}
    \begin{tikzpicture}[xscale=6.4,yscale=3.2]
      \draw (.5,1.3) node {(A)};
      \draw [thick,->] (0,0)--(1.1,0);
      \draw [thick,->] (0,0)--(0,1.1);
      \draw [dashed] (1,0)--(1,1.1);
      \draw [dashed] (0,1)--(1,1);
      \draw (1.1,-.1) node {$p$};
      \draw (1,-.1) node {$1$};
      \draw (.57,-.1) node {$p^\ast$};
      \draw (.37,-.1) node {$1/e$};
      \draw (-.05,1) node {$1$};
      \draw (-.05,1.2) node {$\delta$};
      
      \draw [domain=0.568:1] plot ({\x}, {\x-ln(1/\x)});
      \draw [domain=0.37:1] plot ({\x}, {\x-\x*ln(1/\x)});
      
      \draw (.8,.1) node {\scriptsize $\mathbb E[F_0(\infty)] < 1$};
      \draw (.3,.6) node {\scriptsize $\mathbb E[F_+(\infty)] \sim e^{-t(1+\delta-2p)}$};
      \draw (.32,.32) node {\scriptsize $\mathbb E[F_+(\infty)] \! \sim \! e^{-t(1 - \tfrac 1\gamma(1+\log \gamma))}$};
      
      \draw [->] (.39,.26)--(.55, .1);
    \end{tikzpicture}
    \begin{tikzpicture}[xscale=5,yscale=1.6]
      \draw (.5,2.6) node {(B)};
      \draw [thick,->] (0,0)--(1.1,0);
      \draw [thick,->] (0,0)--(0,2.2);
      \draw [dashed] (1,0)--(1,2.1);
      \draw (1.1,-.1) node {$p$};
      \draw (1,-.1) node {$1$};
      \draw (.57,-.15) node {$p^\ast$};
      \draw (.37,-.15) node {$1/e$};
      \draw (0,2.3) node {$-\tfrac 1t \log \mathbb E[F_+(t)]$};
      
      \draw [thick,domain=0:0.3678794] plot ({\x}, {1-2*\x});
      \draw [thick,domain=0.3678794:0.57] plot ({\x}, {1+\x/ln(\x)*(1 + ln(ln(1/\x)) - ln(\x))});
      \draw (.15,.5) node {\scriptsize $\delta=0$};
      
      \draw [thick,domain=0:1] plot ({\x}, {2-2*\x});
      \draw (.6,1) node {\scriptsize $\delta=1$};
      
      \draw [thick,domain=0:0.729843] plot ({\x}, {1+.5-2*\x});
      \draw [thick,domain=0.729846:0.766247] plot ({\x}, {1+(\x-.5)/ln(\x)*(1 + ln(ln(1/\x)) - ln(\x-.5))});
      \draw (.375,.75) node {\scriptsize $\delta=0.5$};
      
      \draw [dashed,domain=0.37:1] plot ({\x}, {1+\x - \x*ln(1/\x) - 2*\x});
      
      \draw [dashed] (.37,0) -- (0.37, 0.2);
      \draw (.75,.25) node {\scriptsize $1 - p\log \tfrac 1p - p$};
      \draw [->] (.75,.2)--(.6, .1);
    \end{tikzpicture}
  \end{center}
  \caption{\label{fig1}}Illustration of
  Theorem~\ref{thm:deg-dis-whole-graph}. In (A), the three cases are
  shown in the $p-\delta$-plane. In (B), we draw the different
  exponential rates of decrease in $\mathbb E[F_+]$.
\end{figure}

~

\noindent
In the case $\delta\geq p - \log \frac 1p$, the frequency of isolated
vertices converges to~1. Hence, it is interesting to study the (degree
distribution of the) sub-graph of non-isolated vertices.  In order to
do so, note that $F_+(t) = 1 - H_1(t)$, with $H$ from
\eqref{eq:H}. Also note that, if at some time $t$ a duplication event
is triggered, $1-H_p(t)$ denotes the probability that the new node is
not isolated. The next result gives asymptotics of the generating
function of the degree distribution of the sub-graph of non-isolated
vertices,
$$ \frac{\sum_{k=1}^{\infty} (1-x)^k \mathbb E[F_k(t)]}{\mathbb
  E[F_+(t)]} = \frac{\mathbb E[H_x(t) - H_1(t)]}{\mathbb E[1 -
  H_1(t)]} = 1 - \frac{\mathbb E[1 - H_x(t)]}{\mathbb E[1 -
  H_1(t)]}.$$

\begin{proposition}[Limit of degree distribution on the set of
  non-isolated vertices]\label{prop:limitOfI_x}
  Let $X$ be the process given in Lemma~\ref{lem:dual-p-jump-process}.
  \begin{enumerate}
    \item
      If $\delta>p-p\log\frac1p$, then $\E[1 - H_x(t)]\sim\E_x[X_t]\cdot B_1(0)$.
    \item
      If $\delta=p-p\log\frac1p$ and if the limits
      $\E_x[X_t^{k+1}]/\E_x[X_t^k]$ as $t\to\infty$ exist for
      all $x \in (0,1]$ and $k=1,2,...$, then
      $\E[1 - H_x(t)]\sim\E_x[X_t]\cdot B_1(0)$.
    \item If $p-\log\frac1p < \delta < p-p\log\frac1p$ and if the
      limits $\E_x[X_t^{k+1}]/\E_x[X_t^k]$ as $t\to\infty$ exist for
      all $x \in (0,1]$ and $k=1,2,...$, then
      \[
        \E[1-H_x(t)] \sim \E_x[X_t]\cdot
        \sum_{\ell=1}^\infty B_\ell(0)(-1)^{\ell+1}
        \prod_{k=1}^{\ell-1}\frac{\log\frac1{\gamma p^k} + \gamma p^k
          - 1}{k\gamma p},
      \]
      where $\gamma=\log\frac1p/(p-\delta)$.
  \end{enumerate}
  In both cases,
  \begin{align}
    \label{eq:P1}
    \displaystyle \frac{\E[1-H_x(t)]}{\E[F_+(t)]} \sim
    \frac{\E_x[X_t]}{\E_1[X_t]} \xrightarrow{t\to\infty}
    x\exp\Big(p\int_0^\infty\frac{\E_1[X_s^2]}{\E_1[X_s]}-\frac{\E_x[X_s^2]}{\E_x[X_s]}ds\Big).
  \end{align}
\end{proposition}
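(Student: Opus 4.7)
To prove Proposition~\ref{prop:limitOfI_x}, the plan is to combine Lemma~\ref{lem:dual-p-jump-process} with a moment-hierarchy analysis of $X$. Expanding $1-(1-y)^k = \sum_{\ell=1}^k \binom{k}{\ell}(-1)^{\ell+1} y^\ell$ and swapping sums (legitimate since $G_0$ is finite, so only finitely many $B_\ell(0)$ are nonzero), with $m_\ell(t,x) := \E_x[X_t^\ell]$, one gets
\begin{align*}
  \E[1-H_x(t)] \;=\; \sum_{k=0}^\infty F_k(0)\bigl(1-\E_x[(1-X_t)^k]\bigr) \;=\; \sum_{\ell=1}^\infty (-1)^{\ell+1} B_\ell(0)\, m_\ell(t,x),
\end{align*}
so everything reduces to asymptotics of the $m_\ell$. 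Applying the generator $\cL f(x) = (f(px)-f(x)) + (px(1-x)-\delta x)f'(x)$ of $X$ to $f(y)=y^\ell$ and using Dynkin's formula yields the moment hierarchy
\begin{align*}
  \partial_t m_\ell \;=\; a_\ell m_\ell - \ell p\, m_{\ell+1}, \qquad a_\ell := p^\ell - 1 + \ell(p-\delta),
\end{align*}
equivalent, after dividing by $m_\ell$ and integrating, to $m_\ell(t,x) = x^\ell\exp\!\bigl(a_\ell t - \ell p\int_0^t m_{\ell+1}/m_\ell\,ds\bigr)$.

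In Cases~1 and~2, the goal is $m_\ell/m_1\to 0$ for $\ell\geq 2$, which collapses the alternating series to its leading term $B_1(0)\, m_1(t,x)$ (the sum-limit exchange is trivial as the sum is finite). Since $X_t\in[0,1]$ implies $m_\ell\leq m_2$ for $\ell\geq 2$, it suffices to show $m_2/m_1\to 0$. For Case~1 this is already implicit in Theorem~\ref{thm:deg-dis-whole-graph}.1: finiteness of $\int_0^\infty m_2(s,1)/m_1(s,1)\,ds$ (equivalently $c>0$) forces $m_2/m_1\to 0$ along a subsequence, and standard smoothness upgrades this to a true limit; the same argument extends to $x\in(0,1)$. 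In the boundary Case~2, the hypothesis that $r_1(x):=\lim_t m_2/m_1$ exists gives $m_1(t,x)\sim C(x) e^{(a_1 - p\, r_1(x))t}$, and matching the exponential rate against Theorem~\ref{thm:deg-dis-whole-graph}.2 (whose rate at $\gamma=1/p$ is precisely $a_1$) forces $r_1=0$.

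For Case~3, the assumption of existing ratios $r_\ell(x):=\lim_t m_{\ell+1}/m_\ell$ allows us to divide the ODE by $m_\ell$ and pass to the limit, giving $\lim_t \tfrac1t\log m_\ell(t,x) = a_\ell - \ell p\, r_\ell(x)$. Matching across $\ell$ forces this limit to be a common value $\lambda$, whence $r_\ell(x) = (a_\ell - \lambda)/(\ell p)$ --- in particular $x$-independent. Identifying $\lambda$ with the rate from Theorem~\ref{thm:deg-dis-whole-graph}.2, $\lambda = -1+(1+\log\gamma)/\gamma$, and using $p-\delta = \log(1/p)/\gamma$ to simplify, one recovers $r_\ell = (\log(1/(\gamma p^\ell)) + \gamma p^\ell - 1)/(\ell\gamma p)$, hence $m_\ell/m_1 \to \prod_{k=1}^{\ell-1} r_k$ and the stated product formula. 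For \eqref{eq:P1}, the first $\sim$ follows from Cases~1--3 since the $x$-independent multiplicative constant cancels in the ratio; for the explicit limit, divide the integrated ODE at $x$ by the same at $x=1$:
\begin{align*}
  \frac{m_1(t,x)}{m_1(t,1)} \;=\; x \exp\!\Bigl(-p\int_0^t \Bigl(\tfrac{m_2(s,x)}{m_1(s,x)} - \tfrac{m_2(s,1)}{m_1(s,1)}\Bigr)\,ds\Bigr),
\end{align*}
and let $t\to\infty$: the integrand tends to $r_1(x)-r_1(1)=0$, and a decay-rate estimate makes the integral converge to a finite limit, giving \eqref{eq:P1}.

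The main obstacle will be the quasi-stationary/ergodic analysis of the PDMP $X$ that underlies the $x$-independence and (at the boundary) existence of the moment-ratio limits --- precisely what the ``if the limits exist'' hypothesis in the statement is standing in for --- together with the decay control on $m_2(s,x)/m_1(s,x) - m_2(s,1)/m_1(s,1)$ required for the final integral in \eqref{eq:P1} to converge.
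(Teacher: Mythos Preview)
Your expansion $\E[1-H_x(t)] = \sum_{\ell\geq 1}(-1)^{\ell+1}B_\ell(0)\,m_\ell(t,x)$ and the moment hierarchy $\partial_t m_\ell = a_\ell m_\ell - \ell p\, m_{\ell+1}$ are exactly the paper's route (see the proof in Section~3.4 together with \eqref{eq:log-deriv-moments-of-X}). The structural argument --- reduce to ratios $m_{\ell+1}/m_\ell$, show they stabilise, read off the product formula --- is also the same.

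Where you diverge is in \emph{identifying} the limiting ratios. The paper does this via Remark~\ref{rem:comvMom}: integrating the moment ODE and invoking an external rate result (Corollary~2.4 of \cite{HerPf2}) for $\tfrac1t\log\E_x[X_t^k]$ yields the Ces\`aro limit $c_k(p,\delta)$ directly, and the hypothesis ``if the limits exist'' then pins them down. You instead try to recover the common exponential rate $\lambda$ by matching against Theorem~\ref{thm:deg-dis-whole-graph}.2. This has a genuine gap: Theorem~\ref{thm:deg-dis-whole-graph}.2 gives the decay rate of $\E[F_+(t)] = \sum_\ell(-1)^{\ell+1}B_\ell(0)\,m_\ell(t,1)$, and equating this to the rate $\lambda$ of $m_1$ presupposes that the limiting alternating sum $\sum_\ell(-1)^{\ell+1}B_\ell(0)\prod_{k<\ell}r_k$ is nonzero --- which depends on $G_0$ and is not established. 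The paper sidesteps this by getting $\lambda$ (and hence the $r_\ell$) from the moments of $X$ alone, never passing through $\E[F_+(t)]$.

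Two smaller points. In Case~1, your ``finite integral $\Rightarrow$ subsequential limit $\Rightarrow$ full limit by smoothness'' can be made rigorous (the ratio $m_2/m_1$ has bounded derivative via the ODE, hence is uniformly continuous), but the paper's Lemma~\ref{lem:momentsOfX} gives the stronger exponential decay $m_2/m_1 = O(e^{-\varepsilon t})$ directly, which is also what you need for the integral in \eqref{eq:P1} to converge. For \eqref{eq:P1} itself, your integrated-ODE ratio is exactly what the paper does (last line of Section~3.4); in Cases~2--3 neither you nor the paper supplies the decay estimate on $m_2(s,x)/m_1(s,x) - m_2(s,1)/m_1(s,1)$ that would make the improper integral converge --- the paper simply asserts the formula as a consequence of \eqref{eq:log-deriv-moments-of-X}, so your flagging of this as an obstacle is accurate.
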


\begin{remark}[Interpretation]
  Since the right hand side of \eqref{eq:P1} is non-trivial, the
  Proposition shows that the degree distribution of the sub-graph of
  non-isolated vertices converges to some non-trivial
  distribution. However, for a closer analysis, more insight into the
  process $X$ would be necessary.
\end{remark}

\begin{remark}[Convergence of moments]\sloppy \label{rem:comvMom}
  In 2. and 3., we have to assume that the limits of
  $\E_x[X_t^{k+1}]/\E_x[X_t^k]$ exist. We conjecture that this
  assumption is not necessary since we can at least prove the weaker
  convergence in the sense of C\'esaro. Precisely, we can show that
  for $p-\log\frac1p<\delta<p-p\log\frac1p$, it holds that for
  $k=1,2,...$ and any $x\in (0,1]$
  \begin{align}\label{eq:ces}
    \frac 1t \int_0^t \frac{\E_x[X_s^{k+1}]}{\E_x[X_s^k]} ds \xrightarrow{t\to\infty} c_k(p,\delta)
    := \frac{\log\frac1{\gamma p^k} + \gamma p^k - 1}{k\gamma p}
    \in(0,1].
  \end{align}
  \\
  Indeed, since the moments of $X$ satisfy, for $k=1,2,...$,
  \begin{align}\label{eq:log-deriv-moments-of-X}
    \frac d{dt}\log\E_x[X_t^k]
    &= \frac1{\E_x[X_t^k]}\Big(p^k\E_x[X_t^k]-\E_x[X_t^k]
      + (p-\delta)k\E_x[X_t^k]-pk\E_x[X_t^{k+1}]\Big)\notag\\
    &= p^k+(p-\delta)k-1-\frac{pk\E_x[X_t^{k+1}]}{\E_x[X_t^k]}
  \end{align}
  and thus, integrating, dividing by $-t$, and using
  Corollary 2.4 of \cite{HerPf2},
  \begin{align*}
    \frac 1t \int_0^t \E_x[X_s^{k+1}]/\E_x[X_s^k] ds & = - \frac{1}{pk} \Big(\frac 1t(\log\E_x[X_t^k]-\log(x^k)) 
                                                       - p^k - (p-\delta)k +1\Big)
    \\ & \xrightarrow{t\to\infty} \frac{p - \delta + p^k - (1 + \log\gamma)/\gamma}{pk} = c_k(p,\delta).
  \end{align*}
  Clearly, if the limit as assumed in
  Proposition~\ref{prop:limitOfI_x} exists, it must hold that
  $\E_x[X_t^{k+1}]/\E_x[X_t^k] \xrightarrow{t\to\infty} c_k(p,
  \delta)$.\\
  Similarly, for the critical case $\delta=p-p\log\frac1p$, where $\gamma=1/p$,
  it follows from \eqref{eq:ces} for $k\geq2$
  \begin{align*}
    \frac1t\int_0^t \frac{\E_x[X_s^k]}{\E_x[X_s]} ds
      & \leq \frac1t\int_0^t \frac{\E_x[X_s^2]}{\E_x[X_s]} ds
        \xrightarrow{t\to\infty} c_1(p,\delta)
        = 0,
  \end{align*}
  such that $\E[X_t^k]=o(\E[X_t])$ if the limits exist.
\end{remark}

\noindent
We now investigate the limiting behavior of certain functionals of the
graph.

\begin{theorem}[Binomial moments, cliques and degrees]\label{thm:functionals}
  As $t\to\infty$, the following statements hold almost surely:
  \begin{enumerate}
    \item
      For $k=1,2,...$, $e^{t\beta_k}B_k(t)\to B_k(\infty)$, where
      $B_k(\infty)\in\cL^1$ and
      \[
        \beta_k =
        \begin{cases}
          1+\delta-2p, &\text{if }\delta\geq p-\frac{p(1-p^{k-1})}{k-1},\\
          1+\delta k-pk-p^k, &\text{otherwise.}
        \end{cases}
      \]
    \item
      For $k=2,3,...$, $\exp(-t(kp^{k-1}-\delta\genfrac(){0pt}{}k2))C_k(t)\to C_k(\infty)$,
      where $C_k(\infty)\in\cL^1$.
      \begin{enumerate}
        \item 
          If $C_k(0)>0$ and $\delta<2p^{k-1}/(k-1)$, the convergence also holds in $\cL^1$
          and $\Pw(C_k(\infty)>0)>0$.
        \item
          Otherwise, if $\delta\geq2p^{k-1}/(k-1)$, $C_k(t)=0$ for all $t\geq T^{C_k}$ for
          some finite random variable $T^{C_k}$ and $\Pw(C_k(\infty)=0)=1$.
      \end{enumerate}
    \item For $i\leq|V_0|$, $e^{-t(p-\delta)}D_i(t)\to D_i(\infty)$,
      where $D_i(\infty)\in\cL^1$.  
      \begin{enumerate}
      \item If $D_i(0)>0$ and $\delta < p$, the convergence also holds
        in $\cL^r$ for all $r\geq 1$ and
        \begin{align}
          \label{eq:212}
          \E[D_i(\infty)] = D_i(0)\Big( 1 + \frac{p}{|V_0|}\Big).
        \end{align}
      \item Otherwise, if $\delta\geq p$, $D_i(t)=0$ for all
        $t\geq T^{D_i}$ for some finite random variable $T^{D_i}$ and
        $\Pw(D_i(\infty)=0)=1$.
      \end{enumerate}
  \end{enumerate}
\end{theorem}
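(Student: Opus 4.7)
The plan is to analyze each of the three functionals through its generator: determine the correct exponential rescaling by solving the ODE for its expectation, construct a (possibly exact) martingale to obtain a.s.\ convergence via Doob's theorem, and use second-moment computations for $\cL^r$-convergence in the super-critical regimes. The lower-order correction $\kappa_t-1=|V_t|^{-1}$ appearing in every generator computation is controlled by $\int_0^\infty|V_s|^{-1}ds<\infty$ a.s.\ (which follows from $|V_t|\sim e^t$).

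For (1), writing $N_k(t):=|V_t|B_k(t)=\sum_i\binom{D_i(t)}{k}$, a duplication of $v_j$ contributes in expectation $p^k\binom{D_j(t)}{k}$ from the new vertex, plus $p\sum_{w\sim v_j}\binom{D_w(t)}{k-1}$ from the increments to $v_j$'s neighbours, while each edge deletion removes two terms of the form $\binom{D_i-1}{k-1}$. Using $n\binom{n-1}{k-1}=k\binom{n}{k}$ to sum over vertices, the generator contracts to
\[
\tfrac{d}{dt}\E[N_k(t)]=\kappa_t\bigl((p^k+pk)\E[N_k(t)]+p(k-1)\E[N_{k-1}(t)]\bigr)-\delta k\,\E[N_k(t)].
\]
Since $|V_t|\sim e^t$ and $\kappa_t\to 1$, the leading exponent of $\E[B_k(t)]$ equals $-\min_{j\leq k}(1+\delta j-pj-p^j)$; this minimum is attained either at $j=k$ or at $j=1$, with crossover exactly at $\delta=p-p(1-p^{k-1})/(k-1)$, yielding the stated case distinction. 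The a.s.\ convergence would then be proved by induction on $k$: for $k=1$ the rescaled $e^{t\beta_1}B_1(t)$ has zero forcing, so its Doob--Meyer decomposition produces an $\cL^1$-bounded martingale with a.s.\ limit; for $k\geq 2$, the $\cL^1$-bound on $B_{k-1}$ from the inductive hypothesis controls the forcing term in the compensator of $e^{t\beta_k}B_k(t)$.

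For (2), a duplication of $v_j$ creates in expectation $p^{k-1}\cdot\#\{k\text{-cliques containing }v_j\}$ new cliques, and using $\sum_j\#\{k\text{-cliques through }v_j\}=kC_k(t)$ the total creation rate is $\kappa_tkp^{k-1}C_k(t)$; since each of the $\binom{k}{2}$ edges of a clique is deleted at rate $\delta$, the destruction rate is $\delta\binom{k}{2}C_k(t)$. Therefore $M^{C_k}(t):=C_k(t)\exp\bigl(-\int_0^t\bigl(\kappa_skp^{k-1}-\delta\binom{k}{2}\bigr)ds\bigr)$ is a non-negative martingale, and since $\int_0^\infty(\kappa_s-1)\,ds<\infty$ a.s., this gives a.s.\ convergence of $e^{-t(kp^{k-1}-\delta\binom{k}{2})}C_k(t)$. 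In the super-critical regime $\delta<2p^{k-1}/(k-1)$, computing the generator of $C_k^2$ controls the quadratic variation via the number of pairs of $k$-cliques sharing a vertex, itself bounded by $C_k\cdot N_{k-1}/(k-1)$ whose growth rate is handled by~(1); this yields $\cL^2$-boundedness of the rescaled process, hence $\cL^1$-convergence with $\E[C_k(\infty)]=C_k(0)>0$ and in particular $\Pw(C_k(\infty)>0)>0$. In the sub-critical case, $\E[C_k(t)]\to 0$ together with the fact that $0$ is absorbing for $C_k$ (a new $k$-clique arises only from duplicating a vertex already contained in an existing $k$-clique) forces extinction in finite time a.s.

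For (3), the generator applied to $D_i$ gives $(\kappa_tp-\delta)D_i(t)$; since the $\kappa_t-1$ correction does not integrate to a deterministic constant, we seek an \emph{exact} martingale of the form $\widetilde M(t)=D_i(t)e^{-t(p-\delta)}/\phi(|V_t|)$. The jump-compensation identity reduces to the recursion $\phi(n+1)(n+1)(n+p)=\phi(n)\,n\,(n+1+p)$, telescoping to $\phi(n)=n(|V_0|+p)/(|V_0|(n+p))$, so that
\[
\widetilde M(t)=D_i(t)e^{-t(p-\delta)}\,\frac{1+p/|V_t|}{1+p/|V_0|},\qquad\E[\widetilde M(t)]=D_i(0),
\]
is a genuine non-negative martingale. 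For $\delta<p$, $\cL^r$-boundedness follows by induction on $r$ from the generator of $D_i^r$ (whose cross-terms involve strictly lower moments), yielding convergence in $\cL^r$ for every $r\geq 1$ and, upon letting $|V_t|\to\infty$ a.s.\ in the limit, the formula~\eqref{eq:212}. For $\delta\geq p$, $\widetilde M$ is a positive supermartingale with vanishing expectation; since $D_i=0$ is absorbing (a duplicate of $v_j$ attaches to $v_i$ only when $v_j\sim v_i$), absorption in finite time follows. The principal technical obstacle throughout is the $\cL^r$-boundedness in the super-critical regimes of (1)--(3): the generator of the squared (or higher-power) process couples back to functionals controlled only in other parts, so the three items must be proved in a carefully interleaved order.
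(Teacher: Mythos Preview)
Your overall strategy is sound and several pieces are correct, but there is one structural difference from the paper and one genuine gap.

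\textbf{Structural difference.} You treat $\kappa_t-1=|V_t|^{-1}$ as an integrable nuisance and build martingales of the form $C_k(t)\exp(-\int_0^t\kappa_s\,kp^{k-1}ds+\cdots)$. The paper instead exploits the very reason $\kappa_t=(|V_t|+1)/|V_t|$ was chosen: the \emph{ratios} $B_k(t)$, $C_k(t)/|V_t|$, $D_i(t)/|V_t|$ satisfy generator identities with \emph{no} $\kappa_t$ left over, so that e.g.\ $e^{-t(kp^{k-1}-1-\delta\binom{k}{2})}C_k(t)/|V_t|$ is an exact martingale. For part~1, this cleanness lets the paper diagonalize the triangular system $\dot B_k=-g(k)B_k+p(k-1)B_{k-1}$ by explicit linear combinations $Q_k=\sum_{m\le k}\lambda_m^kB_m$ (two cases, according to whether $g(1)\le g(k)$ or not), giving exact non-negative martingales $e^{t\beta_k}Q_k$. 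Your inductive Doob--Meyer plan can be made to work, but in the case $g(1)<g(k)$ the compensator of $e^{tg(1)}B_k$ contains a surviving $-[g(k)-g(1)]B_k$ term, so you do not get a martingale directly and need an extra argument; the paper's linear-combination trick sidesteps this. (Your claim that $\min_{j\le k}g(j)$ is attained at $j=1$ or $j=k$ is correct but relies on the strict concavity of $g$, which the paper proves separately.)

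\textbf{Genuine gap.} Your $\cL^2$-argument for cliques does not go through as written. You propose to bound the quadratic variation by the number of pairs of $k$-cliques sharing a vertex and then by $C_k\cdot N_{k-1}/(k-1)$. First, this bound is not justified; second, even granting it, the growth rate of $C_k\cdot N_{k-1}$ exceeds that of $C_k^2$ in general (for $k\ge3$ one checks $1-\beta_{k-1}>kp^{k-1}-\delta\binom{k}{2}$ is possible), so the integral diverges. The paper's route is essential here: it decomposes $\binom{C_k}{2}$ into $C_{k,\ell}$, the number of pairs of $k$-cliques with overlap exactly $\ell$ (including $\ell=0$), derives coupled generator inequalities for $\widetilde C_{k,\ell}(t):=e^{-2tq_k}C_{k,\ell}(t)/(|V_t|(|V_t|-1))$, and then finds explicit positive constants $\lambda_\ell$ such that the combination $R_k=\sum_\ell\lambda_\ell\widetilde C_{k,\ell}$ is a non-negative supermartingale. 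This overlap decomposition is precisely what captures the correct second-moment growth.

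\textbf{On part 3.} Your exact martingale $\widetilde M(t)=D_i(t)e^{-t(p-\delta)}(1+p/|V_t|)/(1+p/|V_0|)$ is a nice alternative to the paper's route (the paper uses the martingale $e^{-t(p-\delta-1)}D_i(t)/|V_t|$ and then computes $\E[e^{-t(p-\delta)}D_i(t)]$ separately via a Gamma-ratio generator identity). Note, however, that your stated recursion for $\phi$ is inverted relative to your closed form; the correct relation is $\phi(n+1)\,n\,(n+1+p)=\phi(n)\,(n+1)(n+p)$. Also, for $\delta\ge p$ the martingale $\widetilde M$ does \emph{not} have vanishing expectation (it is constant); the extinction conclusion comes instead from the fact that $e^{-t(p-\delta)}D_i(t)$ converges a.s.\ while $e^{-t(p-\delta)}\ge 1$ and $D_i(t)\in\N_0$, forcing $D_i(t)$ to hit $0$.
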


\begin{remark}[Interpretations]\label{lem:remInt}
  \begin{enumerate}
  \item For Theorem~\ref{thm:functionals}.1, we have
    $\beta_1 = 1 + \delta - 2p$ and for $\delta=0$, we have
    $\beta_k = 1 - pk - p^k$, $k=1,2,...$ In all cases, we can also
    write $\beta_k = (1 + \delta-2p)\wedge (1+\delta k - pk - p^k)$,
    which immediately shows that $\beta_k$ is continuous in $p$ and
    $\delta$. In addition, for $k=2,3,...$, we find
    $p\geq \frac{p(1-p^{k-1})}{k-1}$, i.e.\ we can choose
    $\delta\geq 0$ such that either of the two cases can in fact
    occur. Moreover, $\beta_k \leq \beta_{k-1}$, which can be seen as
    follows: First, note that
    $\frac{(1-p^{k-2})/(k-2)}{(1-p^{k-1})/(k-1)} = \frac{(1+\cdots
      +p^{k-3})/(k-2)}{(1+\cdots + p^{k-2})/(k-1)}\geq 1$. So, if
    $\delta \geq p - \frac{p(1-p^{k-1})}{k-1}$, both $\beta_{k-1}$ and
    $\beta_k$ do not depend on $k$ anyway. Then, if
    $ p - \frac{p(1-p^{k-1})}{k-1} \geq \delta \geq p -
    \frac{p(1-p^{k-2})}{k-2}$, we have that
    $$ \beta_{k-1} - \beta_k = 1 + \delta - 2p - \min(1 + \delta - 2p, 1 + \delta k - pk -p^k)\geq 0.$$
    Finally, for $p - \frac{p(1-p^{k-2})}{k-2} \geq \delta$, we have
    \begin{align*}
      \beta_{k-1} - \beta_k  & =
                               p - \delta - p^{k-1} + p^k \geq \frac{p(1-p^{k-1})}{k-1} -p(1-p)p^{k-2}
      \\ & = p(1-p)\Big( \frac{1 + \cdots + p^{k-2}}{k-1} - p^{k-2}\Big)\geq 0.
    \end{align*}
    The fact that $\beta_{k-1} \geq \beta_k$ implies that there are much
    less star-like subgraphs with $k-1$ leaves than star-like subgraphs
    with $k$ leaves, $k=2,3,...$ This can only be explained by nodes
    with high degree.
  \item Noting that $B_1(t) = 2|V(t)| \cdot C_2(t)$ and
    $ \frac 1t \log |V(t)| \xrightarrow{t\to\infty} 1$, we see that
    the results in 1.\ and 2.\ imply the same growth rate for the
    number of edges.
  \item Interestingly, we find that $\delta\geq p$ implies that all
    vertices of the initial graph will eventually be isolated (i.e.\
    have degree~0). However, the total number of edges, denoted by
    $C_2$, only dies out for $\delta \geq 2p$. So, for $p<\delta<2p$,
    all initial vertices become isolated, but are copied often enough
    such that the number of edges is positive for all times with
    positive probability.
  \end{enumerate}
\end{remark}

\begin{figure}[htb]
  \begin{center}
    \begin{tikzpicture}[xscale=5,yscale=1.6]
      \draw (0.5,2.5) node {(A)};
      \draw [thick,->] (0,0)--(1.1,0);
      \draw [thick,->] (0,-2)--(0,2.1);
      \draw [dashed] (1,-2)--(1,2.1);
      \draw (1.1,-.14) node {$p$};
      \draw (.98,-.14) node {$1$};
      \draw (0,2.2) node {$\beta_2$};
      
      \draw [thick,domain=0:1] plot ({\x}, {1-2*\x-\x^2});
      \draw (.14,.5) node {\scriptsize$\delta=0$};
      
      \draw [thick,domain=0:1.4146/2] plot ({\x}, {1+ .5-2*\x});
      \draw [thick,domain=1.4146/2:1] plot ({\x}, {1+ 1-2*\x-\x^2});
      \draw (.27,1) node {\scriptsize$\delta=1/2$};
      
      \draw [dashed,domain=0:1] plot ({\x}, {1 - \x- \x*(1-\x)});
      \draw (.3,-.3) node {\scriptsize$\delta=p^2$};
      \draw [->] (.3,-.2)--(.4, .36);

      
      \draw [thick,domain=0:1] plot ({\x}, {1+ 1-2*\x});
      \draw (.43,1.4) node {\scriptsize$\delta=1$};
      
      {\color{white}      \draw (.2,-1) -- (.2,-3);}
    \end{tikzpicture} 
    \begin{tikzpicture}[xscale=5,yscale=1.6]
      \draw (0.5,2.5) node {(B)};
      \draw [thick,->] (0,0)--(1.1,0);
      \draw [thick,->] (0,-3)--(0,2.1);
      \draw [dashed] (1,-3)--(1,2.1);
      \draw (1.1,-.14) node {$p$};
      \draw (.98,-.14) node {$1$};
      \draw (0,2.2) node {$\beta_3$};
      
      \draw [thick,domain=0:1] plot ({\x}, {1-3*\x-\x^3});
      \draw (.15,.22) node {\scriptsize$\delta=0$};
      
      \draw [thick,domain=0:0.6823] plot ({\x}, {1+ .5-2*\x});
      \draw [thick,domain=0.6823:1] plot ({\x}, {1+ 1.5-3*\x-\x^3});
      \draw (.27,1) node {\scriptsize$\delta=1/2$};
      
      \draw [dashed,domain=0:1] plot ({\x}, {1 - \x- \x*(1-\x^2)/2});
      \draw (.2,-.3) node {\scriptsize$\delta=p \!-\! \frac{p(1-p^2)}{2}$};
      \draw [->] (.3,-.2)--(.4, .432);

      
      \draw [thick,domain=0:1] plot ({\x}, {1+ 1-2*\x});
      \draw (.43,1.4) node {\scriptsize$\delta=1$};

    \end{tikzpicture}
  \end{center}
  \caption{\label{fig2}}Illustration of
  Theorem~\ref{thm:deg-dis-whole-graph}.1. We display the rates of
  decay of $\mathbb E[B_k(t)]$ for $k=2$ (A) and $k=3$ (B).
\end{figure}

\begin{remark}[Connection to previous work]
  We analyzed the case $\delta=0$ previously in \cite{HerPf1}.  We
  note that Theorem~2.7 in that paper is extended by
  Theorem~\ref{thm:deg-dis-whole-graph}, which not only treats the
  case $\delta>0$, but also gives precise exponential decay rates in
  1.\ and 2. Moreover, we add here the almost sure convergence of each
  component of the degree distribution in 3.

  Theorem~2.9 in \cite{HerPf1} is dealing with cliques and $k$-stars
  in the case $\delta=0$ and is extended by
  Theorem~\ref{thm:functionals}. More precisely, since
  $|V_t| \sim e^t$, and \cite{HerPf1} treats the time-discrete model,
  we note that Theorem~2.9(1) of \cite{HerPf1} aligns with
  Theorem~\ref{thm:functionals}.2, but only gives $\cL^1$ (rather than
  $\cL^2$)-convergence. In Theorem 2.9(2) of \cite{HerPf1},
  $S^\circ_k$, the number of $k$-stars in the network at time $t$
  relative to the network size, was analyzed, which coincided with the
  factorial moments of the degree distribution. There, a $k$-star was
  not defined as a sub-graph of $G_t$, since it depended on the order
  of the nodes.  $|V_t|\cdot B_k(t)$ now gives the number of star-like
  sub-graphs in the network at time $t$ consisting of $k+1$
  nodes. Since the only difference between $S^\circ_k$ and $B_k$, as
  given in Theorem~\ref{thm:functionals}.1 is a factor of $k!$, the
  results of \cite{HerPf1} easily apply also for $B_k$ if $\delta=0$.
  Theorem 2.14 of \cite{HerPf1} treats the degrees of initial vertices
  and thus can be compared to Theorem~\ref{thm:functionals}.3.
\end{remark}

\section{Proof of Theorem~1}
\label{S:proof1}
Our analysis of the random graph $PD(p,\delta)$ is based on some main
observations: First, the expected degree distribution can be
represented by a birth-death process with binomial disasters $Z$, such
that the distribution of $Z_t$ equals the expected degree distribution
of $G_t$; see \eqref{eq:lem:duality2}. Second, asymptotics for the
survival probability of such processes were studied in \cite{HerPf2}.

\subsection{Birth-death processes with disasters and $p$-jump
  processes}
\label{ss31}
\begin{definition}\label{def:BDPwDis}
  Let $b>0$, $d\geq0$ and $p\in[0,1]$. Let $Z(b,d,p)=(Z_t)_{t\geq0}$
  be a continuous-time Markov process on $\N_0$ that evolves as
  follows: Given $Z_0=z$, the process jumps
  \begin{itemize}
    \item[--]
      to $z+1$ at rate $bz$;
    \item[--] to $z-1$ at rate $dz$;
    \item[--] to a binomially distributed random variable with
      parameters $z$ and $p$ at rate 1.
  \end{itemize}
  Then we call $Z(b,d,p)$ a \emph{birth-death process subject to binomial disasters}
  with \emph{birth-rate} $b$, \emph{death-rate} $d$ and \emph{survival probability}
  $p$.
\end{definition}

\begin{remark}\label{rem:def:BDPwD} 
  \begin{enumerate}
  \item A birth-death process with binomial disasters, $Z(b,d,p)$
    models the size of a population where each individual duplicates
    with rate $b$ and dies with rate $d$, subjected to binomial
    disasters at rate 1.  These disasters are global events that kill
    off each individual independently of each other with probability
    $1-p$, which generates the binomial distribution in the third part
    of Definition \ref{def:BDPwDis}.
  \item \cite{HerPf2} provides several limit results for such
    branching processes with disasters. As reference for the
    following, let $Z = Z(b,d,p)$ be as above. Then, Corollary~2.7
    of \cite{HerPf2} states:
    \begin{enumerate}
    \item If $b-d\leq p\log\tfrac1p$, $Z$ goes extinct almost surely
      and
      \begin{align*}
        \lim_{t\to\infty}-\tfrac1t\log\Pw(Z_t>0)
        &= (1 - p) - (b-d).
      \end{align*}
    \item If $p\log\tfrac1p<b-d\leq \log\tfrac1p$, $Z$ goes extinct
      almost surely and
      \begin{align*}
        \lim_{t\to\infty}-\tfrac1t\log\Pw(Z_t>0)
        &= 1 - \frac{b-d}{\log\frac1p}\Big(1 + \log\Big(\frac{\log\frac1p}{b-d}\Big)\Big).
      \end{align*}
    \item If $b-d>\log\tfrac1p$, then
      $ \Pw_k(\lim_{t\to\infty}Z_t=0) +
      \Pw_k(\lim_{t\to\infty}Z_t=\infty) = 1 $ and
      \begin{align*}
        \Pw_k(\lim_{t\to\infty}Z_t=\infty)
        &= \Big(1-\frac{d+\log\frac1p}b\Big)\sum_{\ell=1}^k\genfrac(){0pt}{}k\ell(-1)^{\ell-1}
          \prod_{m=1}^{\ell-1}\Big(1-\frac{dm+(1-p^m)}{bm}\Big).
      \end{align*}
    \end{enumerate}
    By constructing a relationship between $PD(p,\delta)$ and
    $Z(p,\delta,p)$ in Lemma \ref{lem:duality}, we are able to
    transfer these results to our duplication graph processes.
  \end{enumerate}
\end{remark}

\begin{lemma}\label{lem:duality}
  Let $p\in(0,1)$, $\delta\geq0$ and recall $F_k(t)$ from Definition
  \ref{def:PDwEdgeDel}. As $h\to0$, the entries $F_k$ of the degree
  distribution yield
  \begin{align}
    \tfrac1{h} & \E[F_k(t+h)-F_k(t)\mid G_t]\notag\\
               &= -(1+pk+\delta k)F_k(t) + p(k-1)F_{k-1}(t) + \delta(k+1)F_{k+1}(t)
                 + \sum_{\ell\geq k}\genfrac(){0pt}{}\ell kp^k(1-p)^{\ell-k}F_\ell(t)+o(1).\label{eq:lem:duality}
  \end{align}
  Moreover, recall $Z := Z(p,\delta,p)$ from
  Definition~\ref{def:BDPwDis} (i.e.\ the binomial distribution of the
  disasters has the birth rate as a parameter) and let
  $\Pw(Z_0=k)=F_k(0)$ for all $k$ be its initial distribution. Then,
  for all $t\geq 0$ and $k$, it holds
  \begin{align}
    \Pw(Z_t=k)=\E[F_k(t)],\label{eq:lem:duality2} 
  \end{align}
  i.e.\ the distribution of $Z_t$ equals the expected degree
  distribution of $G_t$.
\end{lemma}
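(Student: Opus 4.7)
The two claims will be proved separately: the generator identity \eqref{eq:lem:duality} by enumerating events in $[t, t+h]$, and the duality \eqref{eq:lem:duality2} by matching the resulting ODE with the Kolmogorov forward equation of $Z$ and invoking uniqueness.

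For \eqref{eq:lem:duality}, I would condition on what happens in the small interval $[t, t+h]$; having two or more events has probability $O(h^2)$ and gets absorbed into $o(1)$ after division by $h$. Setting $M := |V_t|$ and $N_k := M F_k(t)$, a duplication of a vertex $v$ (at rate $\kappa_t = (M+1)/M$) adds a new vertex of degree $\mathrm{Bin}(\deg v, p)$ and, independently, raises each neighbor's degree by $1$ with probability $p$; the resulting expected change of $N_k$, summed over $v$, collapses via the key identity
\[
\sum_v |\{w \sim v : d_w = k\}| = \sum_w d_w\, \1\{d_w = k\} = k\, N_k(t).
\]
The particular choice of $\kappa_t$ is exactly what makes the per-vertex contribution $\kappa_t (M\Delta N_k - N_k)/[M(M+1)]$ reduce to $(\Delta N_k - F_k)/M$, so that summing over $v$ yields the clean duplication contribution $-(1+pk)F_k + p(k-1)F_{k-1} + \sum_\ell \binom{\ell}{k} p^k (1-p)^{\ell-k} F_\ell$. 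Edge deletions (rate $\delta$ per edge) leave $M$ unchanged and, via the analogous identity $\sum_{\{v,w\}\in E_t}(\1\{d_v = k\} + \1\{d_w = k\}) = k N_k(t)$, contribute $\delta[(k+1) F_{k+1} - k F_k]$. Adding the two gives \eqref{eq:lem:duality}.

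For \eqref{eq:lem:duality2}, the Kolmogorov forward equation for $P_k(t) := \Pw(Z_t = k)$ with $Z = Z(p,\delta,p)$ reads
\[
\tfrac{d}{dt} P_k(t) = -(1 + pk + \delta k) P_k + p(k-1) P_{k-1} + \delta(k+1) P_{k+1} + \sum_{\ell \geq k} \binom{\ell}{k} p^k (1-p)^{\ell-k} P_\ell,
\]
which is precisely the equation satisfied by $\E[F_k(t)]$ after taking expectations in \eqref{eq:lem:duality}. Since $\Pw(Z_0 = k) = F_k(0) = \E[F_k(0)]$ by the assumed initial distribution, both sequences $(P_k(t))_k$ and $(\E[F_k(t)])_k$ solve the same linear initial-value problem.

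The main obstacle will be uniqueness of solutions for this infinite-dimensional system, whose coefficients grow linearly in $k$. I would resolve it by passing to generating functions: both $\varphi(t,x) := \sum_k (1-x)^k P_k(t)$ and $\psi(t,x) := \sum_k (1-x)^k \E[F_k(t)]$ satisfy the same transport-type PDE
\[
\partial_t u(t,x) = (px(1-x) - \delta x) \partial_x u(t,x) + u(t, px) - u(t, x),
\]
with identical initial data $\varphi(0,\cdot) = \psi(0,\cdot) = H_\cdot(0)$. Since the non-local term $u(t, px)$ evaluates $u$ at $px \leq x$, the method of characteristics yields a unique solution (which coincides with $\E_x[H_{X_t}(0)]$ for the process $X$ of Lemma~\ref{lem:dual-p-jump-process}); matching the power-series coefficients in $(1-x)$ then gives $P_k(t) = \E[F_k(t)]$.
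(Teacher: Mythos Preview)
Your proposal is correct and follows essentially the same route as the paper: compute the infinitesimal change of the absolute counts $\Phi_k = |V_t|F_k$, convert to frequencies (where the choice $\kappa_t=(|V_t|+1)/|V_t|$ produces the clean $-F_k$ term), and then match the resulting ODE for $\E[F_k(t)]$ with the Kolmogorov forward equation of $Z(p,\delta,p)$. The one place you go beyond the paper is in explicitly addressing uniqueness of the infinite linear system via the generating-function PDE $\partial_t u = (px(1-x)-\delta x)\partial_x u + u(t,px)-u(t,x)$; the paper simply asserts \eqref{eq:lem:duality2} once the two forward equations are seen to coincide, whereas your argument identifies the unique bounded solution as the semigroup $\E_x[H_{X_t}(0)]$ of the $p$-jump process from Lemma~\ref{lem:dual-p-jump-process}, which is a welcome extra bit of rigor.
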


\begin{proof}
  Letting $\Phi_k(t):=|V_t|F_k(t)$, the absolute number of nodes with degree $k$
  at time $t$, we obtain for $h\to0$ that
  \begin{align*}
    \tfrac1{h}\E[\Phi_k(t+h)-\Phi_k(t)\mid G_t]
    &= -(pk\kappa_t+\delta k)\Phi_k(t) + p(k-1)\kappa_t\Phi_{k-1}(t) + \delta(k+1)\Phi_{k+1}(t)\\[1em]
    &\qquad
      + \sum_{\ell\geq k}\kappa_t\Phi_\ell(t)\genfrac(){0pt}{}\ell kp^k(1-p)^{\ell-k} + O(h),
  \end{align*}
  where the first term on the right hand side stands for the events at
  which a node can lose the degree $k$ by either obtaining a new
  neighbor (by one of its $k$ neighbors being copied, which happens
  with rate $k\kappa_t$, retaining at least the one relevant edge,
  which has probability $p$) or one of its $k$ edges being deleted,
  which happens at rate $\delta k$. The second and third terms
  describe the corresponding gain of a node with degree $k$ by
  analogous events. Finally, the sum equals the rate of a new node
  arising with degree $k$, which can only happen if a node of degree
  $\ell\geq k$ is copied (with rate $\kappa_t\Phi_\ell(t)$) and the
  copy retains exactly $k$ edges
  (which then has a binomial probability).\\
  Now, since $|V_t|$ only increases if a new node is added, i.e. on an
  event related to $\kappa_t$, it follows
  \begin{align*}
    \tfrac1{h}\E[ & F_k(t+h)-F_k(t)\mid F(t)]\\[1em]
     &= \tfrac1{h}\E\Big[\frac{\Phi_k(t+h)}{|V_{t+h}|} -\frac{\Phi_k(t)}{|V_{t+h}|}\Big| F(t)\Big]
       + \tfrac1{h}\E\Big[\frac{\Phi_k(t)}{|V_{t+h}|}  - \frac{\Phi_k(t)}{|V_t|}\Big| F(t)\Big]\\[1em]
     &= \frac{\kappa_t}{|V_t|+1}\Big(-pk\Phi_k(t) + p(k-1)\Phi_{k-1}(t)
             + \sum_{\ell\geq k}\Phi_\ell(t)\genfrac(){0pt}{}\ell kp^k(1-p)^{\ell-k}\Big)\\[1em]
     &\qquad + \frac{- \delta k\Phi_k(t) + \delta(k+1)\Phi_{k+1}(t)}{|V_t|}
             + \kappa_t|V_t|\Phi_k(t)\Big(\frac1{|V_t|+1}-\frac1{|V_t|}\Big) + O(h)\\[1em]
     &= -pkF_k(t)+p(k-1)F_{k-1}(t)+\sum_{\ell\geq k}F_\ell(t)\genfrac(){0pt}{}\ell kp^k(1-p)^{\ell-k}\\[1em]
     &\qquad - \delta kF_k(t) + \delta(k+1)F_{k+1}(t) - F_k(t) + O(h),
  \end{align*}
  and \eqref{eq:lem:duality} holds. Computing the Kolmogorov forwards
  equation for $Z$ shows that for $k=0,1,2,...$
  \begin{align*}
    \tfrac d{dt}\Pw(Z_t=k)
    &= -(1+pk+\delta k)\Pw(Z_t=k) + p(k-1)\Pw(Z_t=k-1) + \delta(k+1)\Pw(Z_t=k+1)\\
    &\qquad + \sum_{\ell\geq k}\Pw(Z_t=\ell)\genfrac(){0pt}{}\ell kp^k(1-p)^{\ell-k},
  \end{align*}
  which is the same relation as \eqref{eq:lem:duality} after taking
  expectation and letting $h\to0$. This shows \eqref{eq:lem:duality2}.
\end{proof}

\subsection{Properties of the piecewise deterministic jump process
  $X$}
\label{ss32}
We have seen the connection of $PD(p,\delta)$ to a branching process
with disasters in Lemma~\ref{lem:duality}. Such branching processes
are in turn closely connected to piecewise deterministic jump
processes as in Lemma~\ref{lem:dual-p-jump-process}
\citep{HerPf2}. Hence, we can now prove
Lemma~\ref{lem:dual-p-jump-process}.

\begin{proof}[Proof of Lemma~\ref{lem:dual-p-jump-process}]
  Lemma \ref{lem:duality} implies that
  $\E[H_x(t)]=\E[(1-x)^{Z_t}]$. Recognizing $(Z_t)=Z(p,\delta,p)$ as a
  homogeneous branching process with disasters $Z^h_{\lambda,q,1,p}$
  in the sense of \citealp[Definition 2.5]{HerPf2}, with death-rate
  $\lambda=p+\delta$ and offspring distribution $q=(q_0,0,q_2,0,\ldots)$
  holding $ q_2 = \frac p{p+\delta} = 1 - q_0 $, the result follows from
  Lemma~4.1 in \cite{HerPf2}. 
\end{proof}

\noindent
For the process $X$, we now obtain a property which is needed in the
proofs of Theorem~\ref{thm:deg-dis-whole-graph} and
Proposition~\ref{prop:limitOfI_x}.

\begin{lemma}[Moments of $X$]\label{lem:momentsOfX}
  Let $X$ be as in Lemma~\ref{lem:dual-p-jump-process}.  If
  $\delta > p-p\log\frac1p$, then $\E_x[X_t^k]=o(\E_x[X_t])$ for all
  $k=2,3,...$ and $\E_x[X_t]\sim ce^{-t(1+\delta-2p)}$, where
  \[
    c :
    =x\cdot
    \exp\Big(-p\int_0^\infty\frac{\E_x[X_s^2]}{\E_x[X_s]}ds\Big)\in(0,1).
  \]
\end{lemma}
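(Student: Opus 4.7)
The plan is to extract the moment ODE
\[
\dot m_k(t) = \alpha_k\, m_k(t) - pk\, m_{k+1}(t), \qquad \alpha_k := p^k + k(p-\delta) - 1, \qquad m_k(t) := \E_x[X_t^k],
\]
from Dynkin's formula applied to $f(x)=x^k$ under the generator $\cL f(x) = [px(1-x)-\delta x]f'(x) + f(px)-f(x)$ of $X$. For $k=1$, division by $m_1$ and integration yield
\[
m_1(t)\,e^{(1+\delta-2p)t} = x\,\exp\Big(-p\!\int_0^t\!\frac{m_2(s)}{m_1(s)}\,ds\Big),
\]
so the left-hand side is monotone non-increasing in $t$ and has a limit $c\in[0,x]$. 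The lemma then reduces to showing $c>0$ and $m_2(t)=o(m_1(t))$; the case $k\geq 3$ follows automatically because $X_t\in[0,1]$ gives $m_k\leq m_2$.

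For the positivity $c>0$, I would invoke the sharp Yaglom-type asymptotic for the subcritical branching process with binomial disasters $Z=Z(p,\delta,p)$ available from \cite{HerPf2}. By Lemma~\ref{lem:dual-p-jump-process} and duality, $\E_1[X_t]=\Pw_1(Z_t>0)$, and the present regime $\delta>p-p\log\tfrac1p$ is case~(a) of Corollary~2.7 of \cite{HerPf2}, in which the strong asymptotic $\Pw_1(Z_t>0)\sim c_1 e^{-(1+\delta-2p)t}$ with $c_1>0$ holds. Together with the monotone representation above this forces $\int_0^\infty m_2/m_1\,ds<\infty$ and identifies $c$ with the claimed expression; the extension to arbitrary $x\in(0,1]$ is via $\E_x[X_t]=\E_1[1-(1-x)^{Z_t}]$, which preserves the exponential prefactor.

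To upgrade integrability to the pointwise statement $m_2(t)=o(m_1(t))$, I set $r(t):=m_2(t)/m_1(t)\in[0,1]$ and differentiate the two moment ODEs to obtain
\[
\dot r(t) = (p^2-\delta)\,r(t) + p\,r(t)^2 - 2p\,\frac{m_3(t)}{m_1(t)}.
\]
Since $m_3/m_1\leq r\leq 1$, the right-hand side is uniformly bounded in $t$, so $r$ is Lipschitz on $[0,\infty)$; a uniformly continuous integrable function on $[0,\infty)$ must vanish at infinity, giving $r(t)\to 0$. For $c<1$: since $X_t>0$ almost surely for every $t$, the ratio $r(t)$ is strictly positive for $t>0$, so $\int_0^\infty r\,ds>0$ strictly and hence $c=x\exp(-p\int_0^\infty r\,ds)<x\leq 1$.

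The main obstacle is establishing $c>0$: the logarithmic rate $\frac{1}{t}\log m_1\to-(1+\delta-2p)$ alone does not preclude a sub-exponential vanishing of $m_1(t)e^{(1+\delta-2p)t}$, so the strong Yaglom-type asymptotic from \cite{HerPf2} is genuinely needed. This is delicate in the subregime $p-p\log\tfrac1p<\delta<p^2$, where the crude comparison $X_t\leq xp^{N_t}e^{(p-\delta)t}$ with $N_t$ a rate-$1$ Poisson variable only yields $m_2(t)\leq x^2 e^{\alpha_2 t}$ with $\alpha_2>-(1+\delta-2p)$, so elementary moment bounds cannot force the integral to be finite.
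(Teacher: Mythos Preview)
Your derivation of the moment ODE and the representation
\[
m_1(t)e^{(1+\delta-2p)t}=x\exp\Big(-p\int_0^t r(s)\,ds\Big),\qquad r(s)=\frac{m_2(s)}{m_1(s)},
\]
matches the paper exactly, and your Lipschitz argument that $r(t)\to0$ once $\int_0^\infty r<\infty$ is known is fine. The gap is in the step establishing $c>0$, i.e.\ $\int_0^\infty r<\infty$.

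You propose to obtain this from a ``strong Yaglom-type asymptotic'' $\Pw_1(Z_t>0)\sim c_1e^{-(1+\delta-2p)t}$ in case~(a) of Corollary~2.7 of \cite{HerPf2}. But as quoted in Remark~\ref{rem:def:BDPwD}.2, that corollary only provides the logarithmic rate $-\tfrac1t\log\Pw(Z_t>0)\to(1-p)-(b-d)$; you explicitly acknowledge that this is insufficient. Moreover, via the duality $\E_1[X_t]=\Pw_1(Z_t>0)$, the sharp asymptotic you want to import is exactly the conclusion of the present lemma (and is what Theorem~\ref{thm:deg-dis-whole-graph}.1 later derives \emph{from} it), so the appeal is circular.

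The paper closes this gap by a different route: it invokes Corollary~2.4 of \cite{HerPf2}, which gives the limits of $\tfrac1t\log\E_x[X_t^k]$ for each $k$ separately. Taking the difference for $k=1$ and $k=2$ (and splitting into the two subcases $p-p\log\tfrac1p<\delta\le p-p^2\log\tfrac1p$ and $\delta\ge p-p^2\log\tfrac1p$) one checks that
\[
-\tfrac1t\log r(t)\xrightarrow{t\to\infty}\varepsilon>0,
\]
so $r(s)=O(e^{-\varepsilon s})$. This single estimate delivers both $m_2=o(m_1)$ and $\int_0^\infty r<\infty$ (hence $c>0$) at once, without any external sharp asymptotic and without the uniform-continuity step. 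Your ``delicate subregime'' $p-p\log\tfrac1p<\delta<p^2$ is handled by the first of the two subcases.
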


\begin{proof}
  Recall $\gamma :=\log\frac1p/(p-\delta)$. Indeed, for
  $p-p\log\frac1p<\delta \leq p-p^2\log\frac1p$, such that
  $\gamma\in(p^{-1},p^{-2})$, it follows from Corollary 2.4 of
  \cite{HerPf2} that -- independent of $x$ --
  \begin{align*}
    -\frac1t\log\Big(\frac{\E_x[X_t^2]}{\E_x[X_t]}\Big)
    \xrightarrow{t\to\infty} 1-\frac{1+\log\gamma}\gamma - (1+\delta-2p)
    = 2p\cdot c_2(p,\delta)
    > 0.
  \end{align*}
  On the other hand, if $\delta \geq p-p^2\log\frac1p$, the same
  corollary gives
  \begin{align*}
    -\frac1t\log\Big(\frac{\E_x[X_t^2]}{\E_x[X_t]}\Big)
    \xrightarrow{t\to\infty} &\ 1+2\delta-2p-p^2 - (1+\delta-2p)\\[1em]
    =&\ \delta-p^2
       \geq p^2(\tfrac1p - 1 - \log\tfrac1p)
       > 0.
  \end{align*}
  In either case, there is an $\varepsilon>0$ such that
  $0<r(s):=\E_x[X_s^2]/\E_x[X_s]=O(e^{-\varepsilon s})$
  and it follows from \eqref{eq:log-deriv-moments-of-X}
  \begin{align*}
    \E_x[X_t]
    &= x\exp\Big(-t(1+\delta-2p) - p\int_0^tr(s)ds\Big),
  \end{align*}
  conluding the proof.
\end{proof}

\subsection{Proof of Theorem \ref{thm:deg-dis-whole-graph}}
By \eqref{eq:lem:duality} in Lemma~\ref{lem:duality}, we get that, as $h\to0$
\begin{align*}
  \tfrac{1}{h} \mathbb E[F_0(t+h) - F_0(t)|G_t] &\to -F_0(t) + \delta F_1(t) + \sum_{\ell=0}^\infty(1-p)^\ell F_\ell(t)
                                                = \delta F_1(t)+\sum_{\ell=1}^\infty (1-p)^\ell F_\ell(t)
                                                \geq0.
\end{align*}
Hence, $(F_0(t))_t$ is a bounded sub-martingale and converges almost
surely and in $\cL^1$.  Consequently, the left hand side has to
converge to 0 almost surely. Since $(1-p)^\ell$ is always positive,
that can only be the case if $F_\ell(t)\to0$ almost surely for all
$\ell=1,2,...$, which guarantees almost sure convergence of $F(t)$ to a
vector of the form $F(\infty)=(F_0,0,0,\ldots)$ in all cases.

Let $Z := (Z_t)_{t\geq 0} := Z(p,\delta,p)$ be as in
Definition~\ref{def:BDPwDis}. We note that
$\mathbb E[F_+(t)] = \mathbb P(Z_t>0)$ by \eqref{eq:lem:duality2}.
For 1., we see from Lemma~\ref{lem:dual-p-jump-process} and
Lemma~\ref{lem:momentsOfX} that
\begin{align*}
  \E[F_+(t)]
  &= 1-\E[H_1(t)]
    = 1-\sum_{k=0}^\infty F_k(0)\E_1[(1-X_t)^k]\\
  &= \sum_{k=1}^\infty kF_k(0)\E_1[X_t] + o(\E_1[X_t])
    \sim B_1(0)\cdot ce^{-t(1+\delta-2p)}
\end{align*}
with $c$ as in Theorem~\ref{thm:deg-dis-whole-graph}.1. Moreover, 2.\
follows directly from Corollary~2.7 in \cite{HerPf2}; see
Remark~\ref{rem:def:BDPwD}.2.\ by setting $b=p$ and $d=\delta$. For
3., we again use Corollary~2.7 in \cite{HerPf2}, but use in addition
that
$$ \mathbb E[F_0(t)] = \mathbb P(Z_t=0) = \sum_{k=0}^\infty \mathbb P_k(Z_t=0) \cdot \mathbb P(Z_0=k),$$
and $\sum_{k\geq \ell} F_k(0) \binom k\ell = B_\ell(0)$. \qed

\subsection{Proof of Proposition \ref{prop:limitOfI_x}}
We set $$I_x(t) := 1-H_x(t) = 1 - \sum_{k=0}^\infty (1-x)^k F_k(t).$$
Using the duality relation in \eqref{eq:duality} and Bernoulli's
formula we compute
\begin{align*}
  \E[I_x(t)]
  &= 1 - \sum_{k=0}^\infty F_k(0)\E_x[(1-X_t)^k]\\
  &= 1 - \sum_{\ell=0}^\infty \E_x[X_t^\ell](-1)^\ell\sum_{k\geq\ell}F_k(0)\genfrac(){0pt}{}\ell k\\
  &= 1 - \sum_{\ell=0}^\infty(-1)^\ell\E_x[X_t^\ell]\cdot B_\ell(0)\\
  &= \sum_{\ell=1}^\infty B_\ell(0)(-1)^{\ell+1}\E_x[X_t^\ell].
\end{align*}
For 1., we obtain $\E[I_x(t)]\sim B_1(0)\E_x[X_t]$ from the last
display together with
Lemma~\ref{lem:momentsOfX}.\\
For {3.} suppose that the limits do exist. Then, we see from
Remark~\ref{rem:comvMom} that
$\E_x[X_t^{k+1}]/\E_x[X_t^k]\xrightarrow{t\to\infty}c_k(p,\delta)$ with
$c_k(p,\delta)$ as in \eqref{eq:ces}.  Hence,
\begin{align*}
  \frac{\E[I_x(t)]}{\E_x[X_t]}
  &= \sum_{\ell=1}^\infty B_\ell(0)(-1)^{\ell+1}\prod_{k=1}^{\ell-1}\frac{\E_x[X_t^{k+1}]}{\E_x[X_t^k]}
    \xrightarrow{t\to\infty}
    \sum_{\ell=1}^\infty B_\ell(0)(-1)^{\ell+1}\prod_{k=1}^{\ell-1}c_k(p,\delta).
\end{align*}
Also, the last part of Remark \ref{rem:comvMom} shows, given that
$\E_x[X_t^{k+1}]/\E_x[X_t^k]\xrightarrow{t\to\infty}c_k(p,\delta)$,
that $\E[X_t^k]=o(\E[X_t])$ for all $k\geq2$ such that 2. follows
analogously to 1.\\
Noting that in any case the limit of $\E[I_x(t)]/\E_x[X_t]$ does not
depend on $x$ and using that $I_1(t)=F_+(t)$, we see that
$\E[I_x(t)]/\E[F_+(t)]\sim\E_x[X_t]/\E_1[X_t]$ and finally,
\eqref{eq:P1} is a consequence of \eqref{eq:log-deriv-moments-of-X}.

\section{Proof of Theorem~2}
\label{S:proof2}
The proof of Theorem~\ref{thm:functionals}, which is carried out in
Section~\ref{ss:24}, will be based on the analysis of several
martingales, which are derived in Proposition~\ref{pro:martingales} in
Section~\ref{ss:23}. In Section~\ref{ss:22}, we will analyze the total
size of $G_t$.

\subsection{Two auxiliary functions}\label{ss:21}
We will need two specific functions in the sequel, which we now
analyze.

\begin{lemma}\label{lem:auxiliary-function}\ 
  Let $p\in(0,1)$, $\delta\geq0$ and
  $$g: \begin{cases} [0,\infty) & \to\R, \\ x & \mapsto 1 + \delta x - px - p^x.\end{cases}$$
  Then, $g$ is strictly concave and thus, $x\mapsto g(x)/x$ strictly
  decreases. Also, the following holds:
  \begin{enumerate}
  \item If $\delta\geq p$, $g$ is strictly increasing and,
    $$ g(x) \xrightarrow{x\to \infty} \begin{cases} \infty, & \text{ if }
      p<\delta, \\ 1, & \text{ if }p=\delta.\end{cases}$$
  \item If $p-\log\frac1p < \delta < p$,
    $$ \text{$g$ is }\begin{cases} \text{strictly increasing on
        $(0,\xi)$,} \\ \text{strictly decreasing on
        $(\xi,\infty)$}\end{cases}$$ for $\xi:=\log\gamma/\log\frac1p$
    with $\gamma:=\log\frac1p/(p-\delta)$. The global maximum is
    $g(\xi) = 1 - \frac1\gamma(1+\log\gamma)$.
  \item If $\delta\leq p-\log\frac1p$,
    $g$ strictly decreases and its maximum is $g(0)=0$.
  \end{enumerate}
\end{lemma}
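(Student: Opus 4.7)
The plan is essentially direct calculus on $g(x)=1+\delta x - px - p^x$. First I would compute
$$g'(x) = (\delta - p) + p^x\log\tfrac1p, \qquad g''(x) = -p^x\bigl(\log\tfrac1p\bigr)^2,$$
and read off $g''<0$ on $[0,\infty)$, giving strict concavity. For the claim that $x\mapsto g(x)/x$ is strictly decreasing, I note that $g(0)=0$, so $g(x)/x$ is the slope of the chord from $(0,0)$ to $(x,g(x))$; for a strictly concave function vanishing at the origin this slope is strictly decreasing in $x$ (a standard consequence of concavity applied to $0<y<x$ with $y=\lambda x$).

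Next I would exploit that $g'$ is itself strictly decreasing, interpolating between $g'(0)=\delta-p+\log\tfrac1p$ and $g'(\infty)=\delta-p$. The three cases of the lemma correspond exactly to the possible sign patterns of these endpoints:

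\textbf{Case 1} ($\delta\ge p$): then $g'(\infty)\ge0$, so by monotonicity $g'(x)>g'(\infty)\ge0$ for all finite $x$, hence $g$ is strictly increasing. For the limit, write $g(x)=1+(\delta-p)x-p^x$; if $\delta>p$ the linear term dominates and $g(x)\to\infty$, while for $\delta=p$ we have $g(x)=1-p^x\to1$.

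\textbf{Case 2} ($p-\log\tfrac1p<\delta<p$): then $g'(0)>0>g'(\infty)$, so strict monotonicity of $g'$ produces a unique $\xi>0$ with $g'(\xi)=0$. Solving $p^\xi\log\tfrac1p=p-\delta$ gives $p^\xi=1/\gamma$, i.e.\ $\xi=\log\gamma/\log\tfrac1p$. Substituting back,
$$g(\xi) = 1-(p-\delta)\xi - p^\xi = 1 - \tfrac{1}{\gamma}\log\gamma - \tfrac{1}{\gamma} = 1-\tfrac{1}{\gamma}(1+\log\gamma),$$
and the signs of $g'$ on $(0,\xi)$ and $(\xi,\infty)$ give the stated monotonicity.

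\textbf{Case 3} ($\delta\le p-\log\tfrac1p$): then $g'(0)\le0$, so $g'(x)<g'(0)\le0$ for every $x>0$, giving strict decrease on $[0,\infty)$ with maximum $g(0)=0$.

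There is no substantive obstacle; the only nontrivial step is the algebraic identification of $g(\xi)$, which is just a direct substitution of $p^\xi=1/\gamma$ into the definition of $g$. The concavity argument for $g(x)/x$ is the one place where one should be careful to invoke $g(0)=0$ explicitly rather than differentiating a quotient.
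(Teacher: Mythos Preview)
Your proposal is correct and follows essentially the same approach as the paper: compute $g'(x)=\delta-p+p^x\log\tfrac1p$, deduce strict concavity, and read off the three cases from the sign pattern of $g'$ between its endpoints $g'(0)=\delta-p+\log\tfrac1p$ and $g'(\infty)=\delta-p$. Your write-up is in fact more careful than the paper's (which even contains a slip, writing that $g'$ ``strictly increases''), and you are right to make explicit that $g(0)=0$ is what makes the chord-slope argument for $g(x)/x$ work.
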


\begin{proof}
  All results are straight-forward to compute. First,
  $g'(x)=\delta-p+\log\frac1p\cdot p^x$ for all cases. Since the right
  hand side strictly increases, $g$ is strictly concave. 1.\ follows
  from the form of $g'$. For 3., we have that
  $g'(x) \leq \delta - p + \log \frac 1p\leq 0$, implying the
  result. For 2., we have that $g'(x) = 0$ iff
  $p^{-x} = \log\frac 1p /(p-\delta)=\gamma$ iff
  $x = \log\gamma/\log\frac 1p = \xi = \log\gamma/(\gamma(p-\delta))$
  and the rest follows. 
\end{proof}

\begin{lemma}\label{lem:hr}
  Let $g^r(n) := \frac{\Gamma(n+r)}{\Gamma(n)}$ and $n_0\geq 2$. Then,
  there are $0<c_r\leq 1 < C_r < \infty$, such that
  \begin{align}
    \label{eq:cr}
    c_r n^r \leq g^r(n) \leq C_r n^r \text{ for all }n\geq n_0.
  \end{align}
\end{lemma}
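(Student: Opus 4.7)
The plan is to reduce the statement to the well-known asymptotic $\Gamma(n+r)/\Gamma(n) \sim n^r$ as $n\to\infty$, and then handle the remaining compact range $[n_0,N]$ by continuity and positivity.

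First, I would invoke Stirling's formula (or the equivalent expansion $\log\Gamma(x+a) = (x+a-\tfrac12)\log x - x + \tfrac12\log(2\pi) + O(1/x)$) to conclude that
\[
\frac{g^r(n)}{n^r} = \frac{\Gamma(n+r)}{n^r\Gamma(n)} \xrightarrow{n\to\infty} 1.
\]
This gives, for any $\varepsilon\in(0,1)$, an $N\geq n_0$ such that
\[
1-\varepsilon \;\leq\; \frac{g^r(n)}{n^r} \;\leq\; 1+\varepsilon \qquad\text{for all }n\geq N.
\]

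Next I would handle the range $n\in[n_0,N]$. Since $\Gamma$ has no zeros or poles on $(0,\infty)$, the function $n\mapsto g^r(n)/n^r$ is continuous and strictly positive on the compact interval $[n_0,N]$, so it attains a positive minimum $m$ and a finite maximum $M$ there. Setting
\[
c_r := \min(m,\,1-\varepsilon) \wedge 1, \qquad C_r := \max(M,\,1+\varepsilon) \vee (1+\varepsilon),
\]
(where the additional wedge with $1$ and join with $1+\varepsilon$ only serve to enforce $c_r\leq 1<C_r$) gives the required two-sided bound for every $n\geq n_0$.

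I do not expect any genuine obstacle here; the only thing to be slightly careful about is the sign convention on $r$. The lemma does not restrict $r$, so if $r<0$ one should still interpret $\Gamma(n+r)$ as defined for $n\geq n_0\geq 2$ with $n+r>0$ (which is the only range used later anyway), and all the above arguments go through verbatim since Stirling's expansion is valid uniformly for the shift $r$ fixed. The qualitative strict inequality $C_r>1$ and $c_r\leq 1$ required in the statement is built in by the explicit definitions above.
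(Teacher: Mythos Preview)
Your argument is correct and essentially the same as the paper's: the paper simply cites the asymptotic $g^r(n)\sim n^r$ (referencing Abramowitz--Stegun 6.1.46) and declares that the result follows. Your version is just a more fleshed-out rendering of that one-line proof, making explicit the compactness argument on $[n_0,N]$ and the cosmetic adjustment to force $c_r\leq 1<C_r$.
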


\begin{proof}
  First, we note that $g^r(n)\sim n^r$ as $n\to\infty$ (see e.g.\
  6.1.46.\ of \citealp{abramowitz}) and hence, the result follows.
\end{proof}

\subsection{The size of the graph}\label{ss:22}
For the asymptotics of the functionals of the random graph in Theorem
\ref{thm:functionals} it will be helpful to understand the asymptotics
of the process $(|V_t|)$.  Here and below, we will frequently use the
following well-known lemma.

\begin{lemma}\label{l:martGen}
  Let $X = (X_t)_{t\geq 0}$ be a Markov process with complete and
  separable state space $(E,r)$, and $f: E \to\mathbb R$ continuous
  and bounded and such that
  $$\lim_{h\to 0} \tfrac 1h \E[f(X_{t+h}) - f(X_t)|X_t=x] = \lambda f(x), \qquad x\in E$$
  for some $\lambda \in\mathbb R$, then
  $(e^{-t\lambda } f(X_t))_{t\geq 0}$ is a martingale.
\end{lemma}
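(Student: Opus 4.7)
The idea is to reduce the martingale property to a statement about the Markov semigroup $P_t f(x) := \E[f(X_t)\mid X_0 = x]$ and then to show, via the hypothesis, that $f$ is an eigenfunction of this semigroup with eigenvalue $e^{t\lambda}$. Since $f$ is bounded and $e^{-t\lambda}$ deterministic, the random variable $e^{-t\lambda}f(X_t)$ is bounded and hence integrable, so only the conditional-expectation identity needs to be checked.

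First, by the Markov property of $X$, for $0\le s\le t$,
\[
\E\bigl[e^{-t\lambda}f(X_t)\mid \cF_s\bigr] = e^{-t\lambda}P_{t-s}f(X_s),
\]
and therefore it suffices to prove the semigroup eigenfunction identity $P_u f(x) = e^{u\lambda}f(x)$ for every $u\ge 0$ and $x\in E$. This reduces the whole claim to a pointwise ODE statement for $u\mapsto P_u f(x)$.

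Next, I would differentiate $u\mapsto P_u f(x)$ at $u\ge 0$. Using the Markov property, $P_{u+h}f(x) - P_u f(x) = P_u(P_h f - f)(x) = \E_x\bigl[\E[f(X_{u+h}) - f(X_u)\mid X_u]\bigr]$. By the hypothesis, the inner term divided by $h$ converges pointwise to $\lambda f(X_u)$. An application of dominated convergence then gives $\tfrac{d}{du}P_u f(x) = \lambda P_u f(x)$, and since $P_0 f(x) = f(x)$, the unique solution is $P_u f(x) = e^{u\lambda}f(x)$, as desired.

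The main obstacle is justifying the interchange of limit and expectation in the differentiation step; this requires a uniform bound on the difference quotients $\tfrac1h \E[f(X_{u+h})-f(X_u)\mid X_u = y]$, not merely pointwise convergence. This is standard whenever $f$ belongs to the domain of the extended generator of $X$ in the usual sense. In all applications in this paper (notably to the countable state-space process $Z$ of Definition \ref{def:BDPwDis} and to functionals like $B_k(t)$, $C_k(t)$, $D_i(t)$ derived from $PD(p,\delta)$), the generator acts through explicit finite sums whose bounded test functions automatically satisfy such a uniform bound, so the application of the lemma is unproblematic.
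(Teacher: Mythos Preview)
The paper does not give its own proof; it simply cites Lemma~4.3.2 of \cite{EK86}. Your sketch is therefore more detailed than what appears in the paper. Your route via the semigroup eigenfunction identity $P_uf=e^{u\lambda}f$ and the ODE $\tfrac{d}{du}P_uf=\lambda P_uf$ is one of the standard arguments, and you correctly flag the only nontrivial step: passing the limit $h\to0$ through the outer expectation requires more than the pointwise hypothesis stated in the lemma (one needs, e.g., bounded-pointwise convergence, i.e.\ that $f$ lies in the domain of the weak/Dynkin generator). The Ethier--Kurtz formulation cited by the paper is phrased via the martingale problem: if $Af=\lambda f$ then $f(X_t)-\lambda\int_0^t f(X_s)\,ds$ is a martingale, from which $e^{-\lambda t}f(X_t)$ follows by integration by parts; this avoids differentiating the semigroup but rests on essentially the same domain assumption. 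Your observation that in all applications here the processes are countable-state jump processes with generators acting through explicit locally finite sums, so the required uniform bounds are automatic, is exactly the right justification.
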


\begin{proof}
  See Lemma 4.3.2 of \cite{EK86}.
\end{proof}

\begin{lemma}[Graph size]\label{lem:yule-process}
  Let $g^r(n) := \Gamma(n+r)/\Gamma(n)$. For all $r>-(|V_0|+1)$, the
  process $(e^{-tr} g^r(|V_t|+1))_{t\geq 0}$ is a non-negative
  martingale. Moreover, there is a random variable $V_\infty$ such
  that the following holds:
  \begin{align*}
    &e^{-t}|V_t| \xrightarrow{t\to\infty}V_\infty \text{ almost surely and in $\mathcal L^r$ for all $r\geq 1$},
    \\
    & e^{t}/|V_t| \xrightarrow{t\to\infty}1/V_\infty \text{ almost surely and in $\mathcal L^r$ for $1\leq r < |V_0|+1$},
    \\
    & V_\infty \text{ is $\Gamma(|V_0|+1,1)$-distributed.}
  \end{align*}
\end{lemma}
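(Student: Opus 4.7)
The plan is to identify $N_t := |V_t|+1$ as a Yule process and then exploit the family of Gamma-ratio martingales. Each of the $|V_t|$ vertices is $p$-copied at rate $\kappa_t = (|V_t|+1)/|V_t|$, so the total rate at which $|V_t|$ increases by one is exactly $|V_t|+1$. Hence $(N_t)$ is a pure-birth process with jump rate $n$ from state $n$, started in $|V_0|+1$, i.e.\ a Yule process.

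Using $\Gamma(n+1+r) = (n+r)\Gamma(n+r)$ and $\Gamma(n+1)=n\Gamma(n)$, a one-line computation gives $g^r(n+1) - g^r(n) = (r/n)g^r(n)$, whence the generator $L$ of $(N_t)$ satisfies $Lg^r(n) = n(g^r(n+1)-g^r(n)) = r\, g^r(n)$. Because $g^r$ is unbounded, I would apply Lemma~\ref{l:martGen} after localizing at $T_k := \inf\{t:N_t\geq k\}$ and pass to the limit using that Yule processes do not explode in finite time; this shows that $M^r_t := e^{-tr} g^r(N_t)$ is a genuine martingale for every $r > -(|V_0|+1)$, the latter condition being necessary only to guarantee $g^r(N_0)<\infty$ (equivalently, that $n+r>0$ throughout the trajectory).

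For the convergence statements, for any $r\geq 1$ the process $M^r_t$ is a non-negative martingale, hence converges almost surely; Lemma~\ref{lem:hr} (which pins $g^r(n) \asymp n^r$ for $n\geq n_0$) then transfers the almost sure limit to $e^{-t}|V_t|\to V_\infty$. To upgrade to $\cL^r$, I would pick any $r' > r$ and use the $\cL^1$-boundedness of $M^{r'}_t$ together with Lemma~\ref{lem:hr} to obtain $\sup_t \E[(e^{-t}|V_t|)^{r'}] < \infty$, hence uniform integrability of $\{(e^{-t}|V_t|)^{r}\}_t$. Taking expectations in the martingale identity and letting $t\to\infty$ yields
\[
\E[V_\infty^r] = \lim_{t\to\infty} \E[M^r_t] = g^r(|V_0|+1) = \frac{\Gamma(|V_0|+1+r)}{\Gamma(|V_0|+1)},
\]
which are exactly the integer moments of $\Gamma(|V_0|+1,1)$; since that distribution is determined by its moments, $V_\infty \sim \Gamma(|V_0|+1,1)$.

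Finally, since $g^{-1}(n) = 1/(n-1)$, we have $e^t/|V_t| = M^{-1}_t$, which is itself a non-negative martingale, so $e^t/|V_t|\to 1/V_\infty$ almost surely. The $\cL^r$ convergence for $1\leq r <|V_0|+1$ follows by applying the uniform-integrability argument above to the martingale $M^{-r}_t$; the condition $-r > -(|V_0|+1)$ is precisely $r < |V_0|+1$. The main obstacle I anticipate is the bookkeeping around Step~2, namely promoting $M^r$ from a local to a true martingale uniformly over the admissible range of $r$ (both positive and negative); once this is in place, everything else is an automatic consequence of the generator identity $Lg^r = r\,g^r$ and of Lemma~\ref{lem:hr}.
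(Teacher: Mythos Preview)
Your proposal is correct and follows essentially the same architecture as the paper: identify $(|V_t|+1)$ as a Yule process, verify the generator identity $Lg^r = r\,g^r$, conclude that $M^r_t := e^{-rt}g^r(|V_t|+1)$ is a non-negative martingale, and then use Lemma~\ref{lem:hr} to translate $\cL^1$-boundedness of $M^{r'}$ into $\cL^r$-boundedness of $(e^{-t}|V_t|)$ (and likewise for negative $r$). Two points of genuine difference are worth flagging.

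First, you are more scrupulous than the paper about the applicability of Lemma~\ref{l:martGen}, which is stated for bounded $f$ while $g^r$ is not bounded. The paper simply invokes the lemma; your localization at $T_k$ and passage to the limit via non-explosion is the honest way to close this gap, and nothing in it is problematic.

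Second, the identification of the law of $V_\infty$ is done differently. The paper recalls that a Yule process started at $m$ has, at time $t$, the law of a sum of $m$ independent geometrics with success probability $e^{-t}$, and takes the scaling limit directly to a $\Gamma(m,1)$. You instead compute $\E[V_\infty^r] = g^r(|V_0|+1) = \Gamma(|V_0|+1+r)/\Gamma(|V_0|+1)$ from the martingale identity (after upgrading $M^r_t \to V_\infty^r$ in $\cL^1$ via uniform integrability and $g^r(n)\sim n^r$), recognise these as the moments of $\Gamma(|V_0|+1,1)$, and conclude by moment determinacy. Both arguments are standard; yours is more self-contained within the martingale framework already set up, while the paper's is shorter once one is willing to quote the explicit finite-time distribution of the Yule process.
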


\begin{proof}
  Let $0<c_r < 1 < C_r<\infty$ be as in Lemma~\ref{lem:hr}.  The
  process $V :=(|V_t|)_{t\geq 0}$ is a Markov process which jumps from
  $v$ to $v+1$ at rate $v+1$. Setting $g^r(v)=\Gamma(v+r)/\Gamma(v)$,
  we see that the process $(g^r(|V_t|+1))_{t\geq0}$ is well-defined
  and non-negative if $|V_t|+1+r>0$ for all $t$, i.e. if
  $r>-(|V_0|+1)$. Then, as $h\to 0$,
  \begin{align*}
    \tfrac 1h \E[g^r(V_{t+h}) - g^r(V_t)|V_t=v] & = (v+1) (g^r(v+2) - g^r(v+1)) + o(1)
    \\ & = (v+1)g^r(v+1) \Big( \frac{v+1+r}{v+1}-1\Big) + o(1)
         = rg^r(v+1) + o(1)
  \end{align*}
  and Lemma~\ref{l:martGen} implies that
  $(e^{-tr}g^r(|V_t|+1))_{t\geq0}$ is a (non-negative) martingale for
  all $r>-(|V_0|+1)$, and therefore $\cL^1$-bounded.  By the
  martingale convergence theorem, this martingale converges almost
  surely. Using \eqref{eq:cr}, the martingale
  $(e^{-t}g^1(|V_t|+1))_t=(e^{-t}(|V_t|+1))_t$ is $\cL^r$-bounded for
  every $r\geq1$ and therefore converges in $\cL^r$. Analogously, for
  $r=-1$, the martingale $(e^{t}g^{-1}(|V_t|+1))_t=(e^{t}/|V_t|)_t$
  is $\cL^r$-bounded for $1\leq r<|V_0|+1$ and converges in $\cL^r$.\\[.5em]
  Noting that $(|V_t|+1)_{t\geq 0}$ is a Yule-process starting in
  $|V_0|+1$, we have that $|V_t|+1$ is distributed as the sum of
  $|V_0|+1$ independent, geometrically distributed random variables
  with success probabilities $e^{-t}$ (see e.g.\ p.\ 109 of
  \citealp{AthreyaNey1972}). Hence, as $t\to\infty$, we find that
  $e^{-t}|V_t|$ converges in distribution to the sum of $|V_0|+1$
  independent, exponentially distributed random variables with unit
  rate. This is a $\Gamma(|V_0|+1,1)$ distribution.
\end{proof}

\subsection{Some martingales}\label{ss:23}
\noindent
Similarly to the discrete-time pure duplication graph in \cite{HerPf1}
we obtain martingales for the functionals of $PD(p,\delta)$.

\begin{proposition}[Martingales]\label{pro:martingales}\
  \begin{enumerate}
  \item Considering the function $g$ of Lemma
    \ref{lem:auxiliary-function}, it holds
    \begin{enumerate}
    \item
      if $g(1)\leq g(k)$, $(e^{tg(1)}B_k(t))_{t\geq0}$ is a martingale
      that almost surely converges to a limit $B_k(\infty)\in\cL^1$.
    \item if $g(1)>g(k)$, there is a process $R_k(t)$ such that
      $(e^{tg(k)}(B_k(t) + R_k(t)))_{t\geq 0}$ is a positive
      martingale that almost surely converges to a limit
      $B_k(\infty)\in\cL^1$ and
      $e^{tg(k)}R_k(t) \xrightarrow{t\to\infty} 0$. In particular,
      $e^{tg(k)}B_k(t)\to B_k(\infty)$ almost surely as $t\to\infty$.
    \end{enumerate}
    Combining (a) and (b), we find
    $e^{t(g(1) \wedge g(k))}B_k(t) \xrightarrow{t\to\infty}
    B_k(\infty)\in \cL^1$.
  \item For $k=2,3,...$,
    $(e^{-t(kp^{k-1}-1-\delta\genfrac(){0pt}{}k2)}C_k(t)/|V_t|)_{t\geq0}$
    is a martingale that converges almost surely to a limit
    $\tilde C_k(\infty)$.  If additionally $C_k(0)>0$ and
    $\delta<2p^{k-1}/(k-1)$, the convergence also holds in $\cL^2$.
  \item Let $i\leq|V_0|$. Then,
    $(e^{-t(p-\delta-1)}D_i(t)/|V_t|)_{t\geq0}$ is a martingale that
    converges almost surely to a limit $\tilde
    D_i(\infty)\in\cL^1$. Moreover, 
    \begin{align}
      \label{eq:Di1}
      \E[e^{-t(p-\delta)} D_i(t)] & = D_i(0)\Big(1 + (1-e^{-t})\frac{p}{|V_0|}\Big)
    \end{align}
    and for $r\geq 2$, there is $C>0$, depending only on $r,p,\delta$
    such that
    \begin{align}
      \label{eq:Di2}
      \mathbb E[(e^{-t(p-\delta)}D_i(t))^r] \leq 
      \E[(D_i(0))^r] + C\int_0^t e^{-s(p-\delta)} \mathbb
      E[(e^{-s(p-\delta)}D_i(s))^{r-1}]ds.
    \end{align}    
  \end{enumerate}
\end{proposition}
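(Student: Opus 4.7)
The strategy throughout is to identify each claimed martingale via Lemma~\ref{l:martGen}: for the functional $\Psi(t)$ under consideration, I compute its infinitesimal drift from the generator of $PD(p,\delta)$ and find the exponential prefactor that cancels it (plus, where necessary, a suitable compensator).

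For part 1, I substitute the recursion \eqref{eq:lem:duality} into $B_k(t)=\sum_{m\geq k}\binom{m}{k}F_m(t)$ and simplify using the identities $m\binom{m}{k}=k\binom{m}{k}+(k+1)\binom{m}{k+1}$, $\ell\binom{\ell-1}{k}=(k+1)\binom{\ell}{k+1}$, together with the Vandermonde-type sum $\sum_{m=k}^{\ell}\binom{m}{k}\binom{\ell}{m}p^m(1-p)^{\ell-m}=\binom{\ell}{k}p^k$. The $B_{k+1}$ contributions from the edge-deletion term and the birth-by-copying term cancel exactly, leaving
\[
  \lim_{h\to 0}\tfrac{1}{h}\E\bigl[B_k(t+h)-B_k(t)\bigm|G_t\bigr] = -g(k)B_k(t)+p(k-1)B_{k-1}(t),
\]
with $g$ as in Lemma~\ref{lem:auxiliary-function}. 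For $k=1$ the forcing vanishes and Lemma~\ref{l:martGen} directly produces the non-negative martingale $(e^{tg(1)}B_1(t))_t$, which is $\cL^1$-bounded and so converges a.s.\ to $B_1(\infty)\in\cL^1$. For $k\geq 2$ I induct, seeking a correction $R_k(t)=\sum_{j=1}^{k-1}c_{k,j}B_j(t)$ so that $e^{t\beta_k}(B_k(t)+R_k(t))$ has zero drift, with $\beta_k=g(1)\wedge g(k)$. Matching the coefficient of each $B_j$ in the drift produces the downward recursion $c_{k,j-1}=c_{k,j}\,p(j-1)/(g(j-1)-\beta_k)$ with $c_{k,k}=1$, and the strict concavity of $g$ from Lemma~\ref{lem:auxiliary-function} combined with the case distinction guarantees that all denominators are non-zero in both regimes (a) and (b). The induction hypothesis that $e^{t(g(1)\wedge g(j))}B_j(t)$ is $\cL^1$-bounded for $j<k$ forces $e^{t\beta_k}R_k(t)\to 0$ a.s., and the asserted convergence of $e^{t\beta_k}B_k(t)$ follows.

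For part 2 I work with $C_k$ and $V_t:=|V_t|$ jointly on the graph. A duplication of $v$ at rate $\kappa_t$ creates on average $p^{k-1}\cdot|\{(k-1)\text{-cliques in }N(v)\}|$ new $k$-cliques (since each such clique extends iff all $k-1$ edges from the new vertex to it are retained), and summing over $v$ with $\sum_v|\{(k-1)\text{-cliques in }N(v)\}|=k\,C_k$ gives total contribution $\kappa_t kp^{k-1}C_k$. Each edge is deleted at rate $\delta$, and a $k$-clique is destroyed as soon as one of its $\binom{k}{2}$ edges is deleted, giving total death rate $\delta\binom{k}{2}C_k$. Combining these with the effect of duplication on $V_t$ (which jumps from $V_t$ to $V_t+1$ at rate $V_t+1=\kappa_tV_t$) and using the telescoping $\kappa_t/(V_t+1)=1/V_t$, the drift of $C_k/V_t$ simplifies to $(kp^{k-1}-1-\delta\binom{k}{2})C_k/V_t$, so Lemma~\ref{l:martGen} produces the martingale. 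The $\cL^2$-claim under $\delta<2p^{k-1}/(k-1)$ is the main obstacle: the predictable quadratic variation contributed by duplication jumps is a sum of dependent Bernoullis indexed by the $(k-1)$-cliques through $v$, so computing the drift of $(C_k/V_t)^2$ forces in lower-order clique counts that must be controlled by an induction on $k$ together with the $\cL^r$ asymptotics for $V_t$ from Lemma~\ref{lem:yule-process}. The subcriticality condition on $\delta$ is precisely what makes the resulting Grönwall-type bound exponentially integrable.

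Part 3 is analogous but simpler. The degree $D_i$ jumps by $+1$ whenever a neighbor of $v_i$ is duplicated and the new vertex inherits the edge to $v_i$ (total rate $\kappa_t p D_i$) and by $-1$ when an incident edge is deleted (total rate $\delta D_i$); combining with the duplication's effect on $V_t$ as in part~2 yields drift $(p-1-\delta)D_i/V_t$ for $D_i/V_t$, hence the martingale. For \eqref{eq:Di1}, the martingale identity $\E[D_i(t)/V_t]=(D_i(0)/|V_0|)e^{t(p-\delta-1)}$ substituted into the linear ODE $\frac{d}{dt}\E[D_i(t)]=(p-\delta)\E[D_i(t)]+p\E[D_i(t)/V_t]$ (which comes from $\kappa_t=1+1/V_t$) and integrated gives the stated formula. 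For \eqref{eq:Di2}, binomial expansion shows that the drift of $D_i^r$ equals $r(p\kappa_t-\delta)D_i^r+P_{r-1}(D_i)$ with $P_{r-1}$ a polynomial of degree $r-1$ whose coefficients depend only on $r,p,\delta$; multiplying by $e^{-rt(p-\delta)}$, using that $\kappa_t-1=1/V_t\leq 1$ is uniformly bounded, and integrating in $t$ gives the recursive bound with constant $C$ absorbing the polynomial coefficients and the $\kappa_t$-correction.
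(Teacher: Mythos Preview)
Your derivation of the recursion
\[
\lim_{h\to0}\tfrac1h\E[B_k(t+h)-B_k(t)\mid G_t]=-g(k)B_k(t)+p(k-1)B_{k-1}(t)
\]
matches the paper, and your treatment of case~(b) via the compensator $R_k$ is correct and essentially the paper's argument. However, case~(a) breaks down. With $c_{k,k}=1$ the $B_k$-coefficient in the drift of $B_k+R_k$ is $-g(k)$, so matching against $-\beta_k(B_k+R_k)$ forces $\beta_k=g(k)$; when $g(1)<g(k)$ you simply cannot build a martingale with rate $g(1)$ this way, because the only left-eigenvector of the lower-bidiagonal system with eigenvalue $-g(1)$ is supported on $B_1$ alone. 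Your subsequent claim that $e^{t\beta_k}R_k(t)\to0$ also fails in case~(a): for $j<k$ one has $g(1)\wedge g(j)=g(1)=\beta_k$, so the induction hypothesis only gives convergence of $e^{t\beta_k}B_j$, not decay to zero. The paper instead constructs, for each $m\le k$, a combination $Q_m=\sum_{j\le m}\lambda_j^m B_j$ with $\lambda_m^m\neq1$ so that $(e^{tg(1)}Q_m)_t$ is a martingale, and then recovers $B_k$ as a linear combination of the $Q_m$'s; this is what you are missing in regime~(a).

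For part~2, your drift computation for $C_k/|V_t|$ is fine and agrees with the paper. The $\cL^2$-argument, however, is where the real work lies, and your sketch does not do it. The second moment of the duplication increment $\Delta C_k$ is governed by \emph{pairs} of $(k-1)$-cliques in $N(v)$, which after summing over $v$ become pairs of $k$-cliques sharing $\ell\ge1$ vertices. These are not ``lower-order clique counts'' $C_j$ with $j<k$, and no induction on $k$ closes the system. The paper introduces $C_{k,\ell}(t)$, the number of unordered pairs of $k$-cliques with exactly $\ell$ common vertices, derives coupled drift inequalities for the normalized quantities $\widetilde C_{k,\ell}$ and $\widehat C_k$, and then exhibits an explicit non-negative linear combination of these that is a supermartingale precisely when $\delta<2p^{k-1}/(k-1)$. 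That combinatorial decomposition is the heart of the $\cL^2$-bound; a Gr\"onwall argument on $(C_k/V_t)^2$ alone will not produce it.

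Part~3 is essentially correct. Your approach via $D_i^r$ and binomial expansion is a minor variant of the paper's use of $g^r(n)=\Gamma(n+r)/\Gamma(n)$; the key estimate you need (and should state explicitly) is $D_i(t)\le|V_t|$, which converts the residual $rp\,D_i^r/|V_t|$ term into something of order $D_i^{r-1}$, not merely the bound $\kappa_t-1\le1$ that you cite.
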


\begin{proof}
  1. Since the sum in $B_k(t)$ is almost surely finite for every $k$
  and $t$, it follows for $h\to0$ using equation
  \eqref{eq:lem:duality}, that
  \begin{align*}
    \tfrac1{h}\E[B_k & (t+h)-B_k(t)|G_t]\\[1em]
                     &= \sum_{\ell\geq k}\genfrac(){0pt}{}\ell k\Bigg(-(1+p\ell+\delta\ell)F_\ell(t)
                       + p(\ell-1)F_{\ell-1}(t) + \delta(\ell+1)F_{\ell+1}(t)\\[.5em]
                     &\qquad\qquad\qquad
                       + \sum_{m\geq\ell}\genfrac(){0pt}{}m\ell p^\ell(1-p)^{m-\ell}F_m(t)\Bigg) + o(1)\\[1em]
                     &= -B_k(t) + p(k-1)F_{k-1}(t)
                       + \sum_{m\geq k}F_m(t)\sum_{\ell=k}^m
                       \genfrac(){0pt}{}\ell k\genfrac(){0pt}{}m\ell p^\ell(1-p)^{m-\ell}\\[.5em]
                     &\qquad\quad + \sum_{\ell\geq k}F_\ell(t)\Bigg(\underbrace{
                       - (p+\delta)\ell\genfrac(){0pt}{}\ell k
                       + p\ell\genfrac(){0pt}{}{\ell+1}k
                       + \delta\ell\genfrac(){0pt}{}{\ell-1}k
                       }_{=:a(\ell,k)}\Bigg)+o(1).
  \end{align*}
  Considering that
  $\genfrac(){0pt}{}{n+1}m-\genfrac(){0pt}{}nm =
  \genfrac(){0pt}{}n{m-1}$ and
  $\frac nm \cdot \genfrac(){0pt}{}{n-1}{m-1} = \genfrac(){0pt}{}nm$,
  we deduce
  \begin{align*}
    a(\ell,k)
    &= p\ell\genfrac(){0pt}{}\ell{k-1} - \delta\ell\genfrac(){0pt}{}{\ell-1}{k-1}
      = (p-\delta)\ell\genfrac(){0pt}{}{\ell-1}{k-1} + p\ell\genfrac(){0pt}{}{\ell-1}{k-2}\\[1em]
    &= (p-\delta)k\genfrac(){0pt}{}\ell k + p(k-1)\genfrac(){0pt}{}\ell{k-1},
  \end{align*}
  which implies that
  \begin{align*}
    \tfrac1{h} & \E[B_k(t+h)-B_k(t)|G_t]\\[.5em]
    =& -B_k(t) + p(k-1)B_{k-1}(t) + (p-\delta)kB_k(t)
    \\ & \qquad \qquad \qquad + \sum_{m\geq k}F_m(t)
         \underbrace{\sum_{\ell=0}^{m-k}
         \genfrac(){0pt}{}{m-k}\ell\genfrac(){0pt}{}mk p^{\ell+k}(1-p)^{m-k-\ell}
         }_{=\genfrac(){0pt}{}mkp^k} + o(1)\\
    =& B_k(t)\big((p-\delta)k - (1-p^k)\big) + p(k-1)B_{k-1}(t) + o(1)\\[1em]
    =& -g(k)B_k(t) + p(k-1)B_{k-1}(t)+ o(1),
  \end{align*}
  recalling the function $g:x\mapsto 1+\delta x-px-p^x$ from Lemma
  \ref{lem:auxiliary-function}. In any case we see that
  $(e^{tg(1)}B_1(t))_{t\geq0}$ is a non-negative martingale converging
  almost surely to a limit $B_1(\infty)\in\cL^1$. For $k=2,3,...$ let
  $g_{\min}(k):=\min_{1\leq\ell\leq k}g(k)$
  the running minimum of $g$. Then, there are two cases to consider:\\[.5em]
  1. $g(1)\leq g(k)$: It holds by strict concavity of $g$ (see
  Lemma~\ref{lem:auxiliary-function}) that in this case
  $g(1)=g_{\min}(k)<g(\ell)$ for all $1<\ell<k$.  Thus, letting
  \[
    \lambda_m^k
     := \frac{g(1)}{g(k)}\prod_{\ell=m}^{k-1}\frac{p\ell}{g(\ell)-g(1)}, \qquad m=2,...,k
  \]
  and $\lambda_1^k:=1+\frac{1}{g(1)}p\lambda_2^k$, these coefficients are well-defined
  and positive. Considering the linear combination $Q_k(t):=\sum_{m=1}^k\lambda_m^kB_m(t)$
  we obtain
  \begin{align*}
    \tfrac1{h} & \E[Q_k(t+h)-Q_k(t)|G_t]\\[.5em]
               &= -g(1)B_k(t) + \sum_{m=1}^{k-1}B_m(t)(-\lambda_m^kg(m) + \lambda_{m+1}^kpm) + o(1)\\[1em]
               &= -g(1)B_k(t) + \sum_{m=2}^{k-1}B_m(t)\lambda_m^k\Big(-g(m) + pm\frac{g(m)-g(1)}{pm}\Big)
                 + B_1(t)\Big(-g(1)\lambda_1^k + p\lambda_2^k\Big) + o(1)\\[.5em]
               &= -g(1)Q_k(t) + o(1).
  \end{align*}
  So now, $(e^{tg(1)}Q_k(t))_{t\geq0}$ is a non-negative martingale
  for every $k$. Since $B_k(t)$ can be represented as a linear
  combination of $(Q_\ell(t))_{1\leq\ell\leq k}$, also
  $(e^{tg(1)}B_k(t))_{t\geq0}$ has to be a non-negative martingale and
  thus converges
  to some $B_k(\infty)\in\cL^1$.\\[.5em]
  2. $g(1)>g(k)$: Here it holds by strict concavity of $g$ that
  $g(\ell)>g(k)=g_{\min}(k)$ for all $\ell=1,...,k-1$. Hence
  \[
    \lambda_m^k := \prod_{\ell=m}^{k-1}\frac{p\ell}{g(\ell)-g(k)},
    \qquad m=1,...,k,
  \]
  are well-defined and positive. We compute analogously to the first
  case that, as $h\to 0$, with
  $Q_k(t) := \sum_{m=1}^k \lambda_m^k B_m$,
  \begin{align*}
    \tfrac1{h} & \E[Q_k(t+h)-Q_k(t)|G_t]\\[.5em]
               &= -g(k)B_k(t) + \sum_{m=1}^{k-1}B_m(t)\lambda_m^k\Big(-g(m) + pm\frac{g(m)-g(k)}{pm}\Big) + o(1)
                 = -g(k)Q_k(t) + o(1).
  \end{align*}
  Thus, $(e^{tg(k)}Q_k(t))_{t\geq0}$ is a non-negative martingale and
  has an almost sure limit $B_k(\infty)\in\cL^1$. Moreover, for
  $\ell<k$, since $g(\ell) > g(k)$, we have that
  $e^{tg(k)} Q_\ell(t) \to 0$, so, writing $R_k(t) = Q_k(t) - B_k(t)$,
  we see that $R_k(t) = \sum_{\ell=1}^{k-1} \mu_\ell^k Q_\ell(t)$ for some
  $\mu_1^k,...,\mu_{k-1}^k$ and $e^{tg(k)}R_k(t)\to 0$ and
  $e^{tg(k)}B_k(t)\to B_k(\infty)$ follows.\\[.5em]
  2. For the cliques fix $t\geq0$ and let $N_k(v)$ for every node
  $v\in V_t$ denote the number of $k$-cliques that node is part
  of. Then, $\sum_{v\in V_t}N_k(v)=kC_k(t)$. Analogously define
  $M_k(e)$ as the number of cliques that the edge $e\in E_t$ is
  contained in, such that
  $\sum_{e\in E_t}M_k(e)=\genfrac(){0pt}{}k2C_k(t)$.  Also, let
  $\tilde C_k(t):=C_k(t)/|V_t|$. Note that for a new $k$-clique to
  arise, a node $v$ inside of such a clique has to be copied. Then,
  every of the $N_k(v)$ cliques $v$ is part of has a chance of
  $p^{k-1}$ that the copy obtains the $k-1$ edges it needs to form a
  new $k$-clique. Also, whenever an edge $e$ is deleted, all $M_k(e)$
  $k$-cliques are destroyed.  We deduce
  \begin{align}
    \tfrac1h\E[\tilde C_k(t+h) - \tilde C_k(t)|G_t]
     &= \frac{|V_t|+1}{|V_t|}\sum_{v\in V_t}\Big(\frac{C_k(t)+p^{k-1}N_k(v)}{|V_t|+1}-\tilde C_k(t)\Big)
       - \delta\sum_{e\in E_t}\frac{M_k(e)}{|V_t|} + o(1)\notag\\[1em]
     &= \sum_{v\in V_t}\Big(\tilde C_k(t) + \frac{p^{k-1}N_k(v)}{|V_t|} - \frac{|V_t|+1}{|V_t|}\tilde C_k(t)\Big)
       - \frac{\delta}{|V_t|}\genfrac(){0pt}{}k2C_k(t) + o(1)\notag\\[1em]
     &= \tilde C_k(t)\Big(\underbrace{kp^{k-1} - 1 - \delta\genfrac(){0pt}{}k2}_{=:q_k}\Big) + o(1).\label{eq:cliques}
  \end{align}
  This shows that $(e^{-tq_k}\cdot\tilde C_k(t))_{t\geq0}$ is a
  non-negative martingale and
  hence converges almost surely to an integrable random variable $\tilde C_k(\infty)$.\\[.5em]
  It remains to show the $\cL^2$-convergence of the martingale
  $(e^{-tq_k}\tilde C_k(t))_{t\geq0}$ for $q_k+1>0$,
  i.e. $\delta<2p^{k-1}/(k-1)$. This will be done by considering the
  number of (unordered) pairs of $k$-cliques,
  $CC_k(t):=\genfrac(){0pt}{}{C_k(t)}2=(C_k(t)^2-C_k(t))/2$, and
  verifying that the process given by
  $\widetilde{CC}_k(t):=e^{-t\cdot2q_k}\frac{CC_k(t)}{|V_t|(|V_t|-1)}$
  is $\cL^1$-bounded, which implies $\cL^2$-boundedness of the
  martingale $(e^{-tq_k}\cdot\tilde C_k(t))_{t\geq0}$
  and concludes the proof.\\[.5em]
  Let us denote by $C_{k,\ell}(t)$ the number of (unordered) pairs of
  $k$-cliques which have exactly $\ell$ shared vertices. Since the
  \emph{overlap} of such a pair (i.e. the sub-graph both cliques have
  in common) is an $\ell$-clique with $\binom\ell2$ edges, the number
  of edges making up the pair equals $2\binom k2- \binom \ell2$.
  Hence, arguing as in the proof of Theorem~2.9 in \cite{HerPf1},
  considering that (i) one new such pair arises if one of the
  $2(k-\ell)$ non-shared vertices is fully copied (probability
  $p^{k-1}$), and (ii) one new pair arises if one of the $\ell$ shared
  vertices is fully copied (probability $p^{2k-\ell-1}$), by taking
  the copied node instead of the original one; in addition, there are
  two new pairs of $k$-cliques, one original and one copied, which
  share $\ell-1$ vertices, and (iii) if one of the $\ell$ shared
  vertices is chosen, but only one of the two cliques is fully copied
  (probability $2p^{k-1}(1-p^{k-\ell}$)) one new pair of $k$-cliques
  arises, which shares $\ell-1$ vertices. In addition, such a pair
  will be destroyed if one of its edges is deleted, hence we deduce
  for $\ell\leq k-2$
  \begin{align*}
    \tfrac1{h}\E[ & C_{k,\ell}(t+h)-C_{k,\ell}(t)\mid G_t]\\[1em]
                  &= (|V_t|+1)\cdot\Big(\frac{2(k-\ell)p^{k-1} + \ell p^{2k-\ell-1}}{|V_t|}C_{k,\ell}(t)
                    + \frac{2(\ell+1)p^{k-1}}{|V_t|}C_{k,\ell+1}(t)\Big)\\[1em]
                  &\qquad\qquad\qquad\qquad\qquad\qquad\qquad\qquad\qquad\qquad\qquad
                    - \delta\cdot\Big(2\genfrac(){0pt}{}k2-\genfrac(){0pt}{}\ell2\Big)\cdot C_{k,\ell}(t) + o(1),
  \end{align*}
  which implies for
  $\widetilde
  C_{k,\ell}(t):=e^{-t\cdot2q_k}\frac{C_{k,\ell}(t)}{|V_t|(|V_t|-1)}$,
  that
  \begin{align*}
    \tfrac1{h}\E[ & \widetilde C_{k,\ell}(t+h)-\widetilde C_{k,\ell}(t)\mid G_t]\\
                  &= -2q_k\widetilde C_{k,\ell}(t) + e^{-t\cdot2q_k}(|V_t|+1)\\
                  &\qquad\qquad\quad\cdot\Bigg( \frac{(2(k-\ell)p^{k-1} + \ell
                    p^{2k-\ell-1})C_{k,\ell}(t) + 2(\ell+1)p^{k-1}C_{k,\ell+1}(t)}
                    {|V_t|\cdot(|V_t|+1)|V_t|}\\
                  &\qquad\qquad\qquad\qquad +
                    C_{k,\ell}(t)\cdot\Big(\frac1{(|V_t|+1)|V_t|} -
                    \frac1{|V_t|(|V_t|-1)}\Big)
                    \Bigg)\\
                  &\qquad\qquad\qquad\qquad\qquad\qquad\qquad\qquad -
                    2\delta\genfrac(){0pt}{}k2\widetilde C_{k,\ell}(t)
                    + \delta\genfrac(){0pt}{}\ell2\widetilde C_{k,\ell}(t) + o(1)\\
                  &= \widetilde C_{k,\ell}(t)\Bigg(-2q_k - 2\delta\genfrac(){0pt}{}k2 +
                    \delta\genfrac(){0pt}{}\ell2
                    + (2(k-\ell)p^{k-1} + \ell p^{2k-\ell-1})\cdot\frac{|V_t|-1}{|V_t|} - 2\Bigg)\\
                  &\qquad
                    + \widetilde C_{k,\ell+1}(t)\cdot2(\ell+1)p^{k-1}\cdot\frac{|V_t|-1}{|V_t|}  + o(1)\\
                  & \leq \widetilde C_{k,\ell}(t)\Big( -2\ell p^{k-1} + \ell
                    p^{2k-\ell-1} + \delta\genfrac(){0pt}{}\ell2\Big)
                    + \widetilde C_{k,\ell+1}(t)\cdot2(\ell+1)p^{k-1} + o(1)\\
                  &= -\ell\widetilde C_{k,\ell}(t)\Big( p^{k-1}(2 - p^{k-\ell}) -
                    \tfrac\delta2(\ell-1)\Big) + 2(\ell+1)p^{k-1}\widetilde
                    C_{k,\ell+1}(t) + o(1).  \intertext{Analogously, for $\ell=k-1$,
                    additional pairs arise if a clique with $k$ vertices is completely
                    copied (probability $p^{k-1}$), so}
                    \tfrac1{h}\E[ & \widetilde C_{k,k-1}(t+h)-\widetilde C_{k,k-1}(t)\mid G_t]\\
                  &\leq -(k-1)\widetilde C_{k,k-1}(t)\Big( p^{k-1}(2 - p) -
                    \tfrac\delta2(k-2)\Big) +
                    2kp^{k-1}e^{-t\cdot2q_k}\cdot\frac{C_k(t)}{|V_t|(|V_t|-1)} + o(1).
                    \intertext{Also, letting
                    $\widehat C_k(t):=e^{-t\cdot2q_k}C_k(t)/(|V_t|(|V_t|-1))$ and
                    combining the calculation above with the one in \eqref{eq:cliques},
                    it follows that }
                    \tfrac1{h}\E[ & \widehat C_k(t+h)-\widehat C_k(t)\mid G_t]\\
                  &= -2q_k\widehat C_k(t)
                    + e^{-t\cdot2q_k}\Bigg(\frac{kp^{k-1}C_k(t)}{|V_t|(|V_t|-1)}\cdot\frac{|V_t|-1}{|V_t|}
                    + \frac{C_k(t)}{|V_t|} - C_k(t)\frac{|V_t|+1}{|V_t|(|V_t|-1)}\Bigg) 
                        - \delta\genfrac(){0pt}{}k2\widehat C_k(t)+ o(1)\\
                  &\leq \widehat C_k(t)\Big(-2q_k + kp^{k-1} - 2 - \delta\genfrac(){0pt}{}k2\Big) + o(1)\\
                  &= -\Big(kp^{k-1} - \delta\genfrac(){0pt}{}k2\Big)\widehat C_k(t)
                    = -k\widehat C_k(t)\Big(p^{k-1}(2-p^0) - \tfrac\delta2(k-1)\Big) + o(1).
  \end{align*}
  Now, since
  \begin{align*}
    &\qquad
    \delta
      < \frac{2p^{k-1}}{k-1}
      = \min_{2\leq m\leq k}\Big\{\frac{2p^{k-1}(2-p^{k-m})}{m-1}\Big\},
  \intertext{the coefficients given by}
    &\qquad
    \lambda_\ell
     := \prod_{m=1}^\ell\frac{2p^{k-1}}{p^{k-1}(2-p^{k-m})-\frac\delta2(m-1)}
  \intertext{for $1\leq\ell\leq k$ are well-defined and positive and we obtain for the
    linear combination
  }
    &\qquad
    R_k(t)
     := \widetilde C_{k,0}(t)
       + \sum_{\ell=1}^{k-1}\lambda_\ell\widetilde C_{k,\ell}(t)
       + \lambda_k\widehat C_k(t)
  \intertext{that}
    \lim_{h\to0}\tfrac1h\E[ & R_k(t+h)-R_k(t)\mid G_t]\\[1em]
     &\leq \sum_{\ell=1}^{k-1}\widetilde C_{k,\ell}(t)\cdot\ell\Big(
            - \lambda_\ell\big(p^{k-1}(2-p^{k-\ell})-\tfrac\delta2(\ell-1)\big)
            + \lambda_{\ell-1}2p^{k-1}\Big)\\[1em]
     &\qquad
          + \widehat C_k(t)\cdot k\Big(
            - \lambda_k\big(p^{k-1}(2-p^{k-k})-\tfrac\delta2(k-1)\big)
            + \lambda_{k-1}2p^{k-1}\Big)
     = 0.
  \end{align*}
  Thus, $(R_k(t))$ is a non-negative super-martingale, $\cL^1$-bounded
  and, since
  $\lambda_{\min}:=\min(\{1\}\cup\{\lambda_\ell;1\leq\ell\leq k\})>0$
  and
  $\widetilde{CC}_k(t)\leq R_k(t)/\lambda_{\min}$, the proof of 2. is complete.\\[1em]
  3. For the degree $D_i(t)$, we set $\tilde D_i(t)=D_i(t))/|V_t|$ and
  compute, as $h\to 0$,
  \begin{align*}
    \tfrac1{h}\E[ & \tilde D_i(t+h) - \tilde D_i(t)\mid G_t]\\[1em]
                  &= (|V_t|+1)\Bigg(
                    \frac{pD_i(t)}{|V_t|}\Big(\frac{D_i(t)+1}{|V_t|+1}-\frac{D_i(t)}{|V_t|}\Big)
                    + \Big(1-\frac{pD_i(t)}{|V_t|}\Big)\Big(\frac{D_i(t)}{|V_t|+1}-\frac{D_i(t)}{|V_t|}
                    \Big)\Bigg)\\[1em]
                  &\qquad\qquad\qquad\qquad\qquad\qquad\qquad\qquad\qquad\qquad
                    + \delta D_i(t)\Big(\frac{D_i(t)-1}{|V_t|}-\frac{D_i(t)}{|V_t|}\Big) + o(1)\\[1em]
                  &= \frac{D_i(t)}{|V_t|}\Bigg(
                    p  \frac{|V_t| - D_i(t)}{|V_t|} 
                    - \Big(1-\frac{pD_i(t)}{|V_t|}\Big)  -
                    \delta \Bigg) + o(1)
    \\ & = \tilde D_i(t)(p-\delta-1) + o(1).
  \end{align*}
  Lemma~\ref{l:martGen} shows that
  $(e^{-t(p-\delta-1)}D_i(t)/|V_t|)_{t\geq0}$ is a non-negative
  martingale, and hence converges almost surely. Furthermore, we write
  with $g^r(n) := \Gamma(n+r)/\Gamma(n)$
  \begin{equation}
    \label{eq:9213}
    \begin{aligned}
      \tfrac 1h \E[ & g^r(D_i(t+h) - g^r(D_i(t))\mid G_t] \\ & =
      (|V_t|+1) \frac{pD_i(t)}{|V_t|}(g^r(D_i(t)+1) - g^r(D_i(t))) +
      \delta D_i(t) (g^r(D_i(t)-1) - g^r(D_i(t))) + o(1) \\ & =
      g^r(D_i(t)) \Big(pr\frac{|V_t|+1}{|V_t|} + \delta
      D_i(t)\Big(\frac{D_i(t)-1}{D_i(t) + r-1} - 1\Big)\Big) + o(1) \\
      & = g^r(D_i(t)) r(p-\delta) + g^r(D_i(t)) r\Big(
      p\Big(\frac{|V_t|+1}{|V_t|}-1\Big) - \delta
      \Big(\frac{D_i(t)}{D_i(t) + r-1} -1\Big)\Big) + o(1) \\ & =
      g^r(D_i(t)) r(p-\delta) + g^r(D_i(t)) r \Big( p \frac{1}{|V_t|}
      + \delta(r-1) \frac{1}{D_i(t) + r-1}\Big) + o(1).
    \end{aligned}
  \end{equation}
  Letting $h\to 0$, this gives \eqref{eq:Di1} for $r=1$ since, using
  the martingale $(e^{-t(p-\delta-1)}D_i(t)/|V_t|)_{t\geq 0}$,
  \begin{align*}
    \E[e^{-t(p-\delta)}D_i(t)] & = D_i(0) + \int_0^t p \E[e^{-s(p-\delta)} D_i(t)/|V_s|] ds
    \\ & = D_i(0) + \int_0^t p e^{-s} D_i(0)/|V_0| ds.
  \end{align*}
  Moreover, since
  $\frac{g^r(D_i(t))}{D_i(t) + r-1} = g^{r-1}(D_i(t))$,
  \eqref{eq:9213} gives for some $C>0$, depending on $r,p,\delta$
  \begin{align*}
    \frac{d}{dt}\E[ e^{-tr(p-\delta)}g^r(D_i(t))] & \leq  C \cdot \E[e^{-tr(p-\delta)} g^{r-1}(D_i(t))],
  \end{align*}
  and \eqref{eq:Di2} follows with Lemma~\ref{lem:hr} and integration.
\end{proof}

\subsection{Proof of Theorem~\ref{thm:functionals}}
\label{ss:24}
1. Recalling the function $g$ from Lemma~\ref{lem:auxiliary-function},
we note that (see also Remark\ref{lem:remInt}.1 for the second
equality)
$\beta_k = (1+\delta-2p) \wedge 1+(\delta - p)k - p^k = g(1)\wedge
g(k)$. Hence, Lemma \ref{pro:martingales}.1 shows that
$e^{t\beta_k}B_k(t)$ is non-negative and converges to some
$B_k(\infty)\in\cL^1$. So, 1. follows.
\\[.5em]
2. We combine Lemma \ref{pro:martingales}.2 (recall the random
variable $\tilde C_k(\infty)$) with the almost sure convergence
$e^{-t}V_t \xrightarrow{t\to\infty} V_\infty$ from
Lemma~\ref{lem:yule-process}. In all cases, we have that
\begin{equation}
  \label{eq:proof21}
  \begin{aligned}
    \exp\Big(-t\Big(kp^{k-1} - \delta\binom k2\Big)\Big)C_k(t) & =
    \exp\Big(-t\Big(kp^{k-1} - 1- \delta\binom
    k2\Big)\Big)C_k(t)/|V_t| \cdot e^{-t} |V_t| \\ &
    \xrightarrow{t\to\infty} \tilde C_k(\infty)\cdot V_\infty =:
    C_k(\infty),
  \end{aligned}  
\end{equation}
where $V_\infty>0$ almost surely.\\
If $\delta\geq 2p^{k-1}/(k-1)$, it is
$kp^{k-1} - \delta\binom k2 \leq 0$ and the convergence can only hold
if $C_k(t) \xrightarrow{t\to\infty} 0$ almost surely. Since
$C_k(t)\in\N_0$, the first hitting time $T^{C_k}$ of 0 has to be
finite. On the other hand, for $\delta<2p^{k-1}/(k-1)$, combining the
$\cL^2$-convergences in Lemma \ref{pro:martingales}.2 and Lemma
\ref{lem:yule-process} we obtain that the convergence in
\eqref{eq:proof21} also holds in $\cL^1$.  Since
$(e^{-t(kp^{k-1}-1-\delta\genfrac(){0pt}{}k2)}C_k(t)/|V_t|)$ is an
$\cL^2$-convergent and thus uniformly integrable martingale,
$\Pw(C_k(\infty)>0)
=\Pw(\tilde C_k(\infty)>0)>0$.\\[.5em]
3. Finally, fix $i\in\{1,\ldots,|V_0|\}$. Again, we combine
Lemma~\ref{pro:martingales}.3 (recall the random variable
$\tilde D_i(\infty)$) with the almost sure convergence
$e^{-t}V_t \xrightarrow{t\to\infty} V_\infty$ from
Lemma~\ref{lem:yule-process}. In all cases, we have that
\begin{equation}
  \label{eq:proof31}
  \begin{aligned}
    e^{-t(p-\delta)} D_i(t) = e^{-t(p-\delta-1)}\frac{D_i(t)}{|V_t|}
    e^{-t} |V_t| \xrightarrow{t\to\infty} \tilde D_i(\infty)\cdot
    V_\infty =: D_i(\infty).
  \end{aligned}  
\end{equation}
If $\delta<p$, we find by \eqref{eq:Di1} that
$(e^{-t(p-\delta)} D_i(t))_{t\geq 0}$ is $\cL^1$-bounded. Then,
inductively using \eqref{eq:Di2} shows that
$(e^{-t(p-\delta)} D_i(t))_{t\geq 0}$ is $\cL^r$-bounded for all
$r\geq 1$. In particular, this implies that the convergence in
\eqref{eq:proof31} also holds in $\cL^r$ for all $r\geq 1$. This gives
convergence of first moments, and \eqref{eq:212} follows by taking
$t\to\infty$ in \eqref{eq:Di1}.\\
If $\delta\geq p$, the almost sure convergence in \eqref{eq:proof31}
implies, since $D_i(t) \in \mathbb N_0$, that $D_i(\infty)=0$, so
there must be a finite hitting time $T^{D_i}$ of~0.


\end{document}